\newtheorem{thm}{Theorem}[section]
\newtheorem{lemma}[thm]{Lemma}
\newtheorem{proposition}[thm]{Proposition}
\newtheorem{corollary}[thm]{Corollary}
\newtheorem{conjecture}{Conjecture}[section]
\theoremstyle{definition}
\newtheorem{definition}{Definition}[section]
\newtheorem{remark}{Remark}[section]
\newtheorem{example}{Example}[section]
\newcommand{\pn}{\mathcal{P}[n]}
\newcommand{\complex}{\mathbb{C}}
\newcommand{\reals}{\mathbb{R}}
\newcommand{\integers}{\mathbb{Z}}
\newcommand{\zplus}{\integers_+}
\newcommand{\lie}{\mathfrak}
\newcommand{\wnot}{w_0}
\newcommand{\kkmod}[1][w]{K(\lambda,#1,\mu)}
\newcommand{\mct}{\mathcal{T}}
\newcommand{\lr}{c_{\lambda\mu}^{\,\nu}}
\newcommand{\lrw}[1][w]{c_{\lambda\mu}^{\,\nu}(#1)}
\newcommand{\lrevw}[1][w]{c_{\mu\lambda}^{\,\nu}(#1)}
\newcommand{\lrwk}[1][w]{c_{k\lambda,k\mu}^{\,k\nu}(#1)}
\newcommand{\lrwd}[1][w]{c_{d\lambda,d\mu}^{\,d\nu}(#1)}
\newcommand{\supstd}[1][]{b_{#1}}
\newcommand{\supstdual}[1][]{b^{*}_{#1}}
\newcommand{\dchar}[1][\mu]{\chi_w({#1})}
\newcommand{\bighive}{\mathbf{\Delta}}
\newcommand{\ps}[1][]{\overline{#1}}
\newcommand{\lmn}{\lambda, \mu, \nu}
\newcommand{\mln}{\mu, \lambda, \nu}
\newcommand{\sd}{\partial}
\newcommand{\bsd}{{\boldsymbol \partial}}
\renewcommand{\iff}{\Leftrightarrow}
\newcommand{\kcone}[1][\sigma]{\koganH(\text{ --} \,, #1)}
\newcommand{\kzcone}[1][\sigma]{\koganHz(\text{ --} \,, #1)}
\newcommand{\rootthree}{1.7320508}
\newcommand{\id}{\mathbf{1}}
\DeclareMathOperator{\SE}{SE}
\DeclareMathOperator{\NE}{NE}
\DeclareMathOperator{\crys}{\mathcal{B}}
\DeclareMathOperator{\demcrys}{Tab}
\newcommand{\dem}{T}
\DeclareMathOperator{\sgn}{sgn}
\DeclareMathOperator{\U}{\mathfrak{U}}
\DeclareMathOperator{\ch}{char}
\DeclareMathOperator{\Tab}{Tab}
\DeclareMathOperator{\Hive}{\mathbb{H}ive}
\DeclareMathOperator{\Hivez}{\Hive_\integers}
\DeclareMathOperator{\GT}{GT}
\DeclareMathOperator{\GTz}{GT_\integers}
\DeclareMathOperator{\kogan}{\GT}
\DeclareMathOperator{\koganz}{\GT_\integers}
\DeclareMathOperator{\koganH}{\Hive}
\DeclareMathOperator{\koganHz}{\Hive_\integers}
\DeclareMathOperator{\bg}{\mathbf{\Gamma}}
\DeclareMathOperator{\len}{len}
\DeclareMathOperator{\horn}{\mathbb{H}orn}
\DeclareMathOperator{\supp}{supp}
\DeclareMathOperator{\wt}{wt}
\DeclareMathOperator{\res}{res}
\begin{document} 
 
\title[Saturation for refined LR coefficients]{ The Saturation property for refined Littlewood--Richardson coefficients}

\author[]{Mrigendra Singh Kushwaha}
\address{Department of Mathematics, Faculty of Mathematical Science, University of Delhi, New Delhi - 110007}
\email{mrigendra154@gmail.com, mskushwaha@maths.du.ac.in}

\author[]{K. N. Raghavan}
\address{School of Interwoven Arts and Sciences, Krea University, Sri City, India}
\email{raghavan.komaranapuram@krea.edu.in}

\author[]{Sankaran Viswanath}
\address{The Institute of Mathematical Sciences, A CI of Homi Bhabha National Institute, Chennai, India}
\email{svis@imsc.res.in}

\thanks{MSK acknowledges financial support from a CV Raman postdoctoral fellowship at IISc, Bengaluru. SV acknowledges partial financial support under SERB MATRICS grant number MTR/2019/000071.}
\subjclass[2020]{17B10, 05E10}
\keywords{Kostant-Kumar modules, refined Littlewood--Richardson coefficients, hive model, pattern-avoiding permutations, saturation property, semigroup property.}
\date{}
 
\begin{abstract} 
Given dominant integral weights $\lmn$ of a finite-dimensional simple Lie algebra $\lie g$ and an element $w$ of its Weyl group, the refined tensor product multiplicity $c_{\lambda \mu}^\nu(w)$ is the multiplicity of the irreducible $\lie g$-module $V(\nu)$ in the so-called Kostant--Kumar submodule $K(\lambda, w, \mu)$ of the tensor product $V(\lambda) \otimes V(\mu)$. We derive properties of these coefficients in general type, including a Brauer--Klimyk type formula and restriction theorems. In type $A$, we obtain a hive model for the $\lrw$  and prove that the saturation and strong semigroup properties hold if the permutation $w$ is  $312$-avoiding, $231$-avoiding, or a commuting product of such elements. This generalizes the classical Knutson--Tao saturation theorem.

 \end{abstract}
 
\maketitle 

\section{Introduction}
Let $\lie g$ be a finite-dimensional simple Lie algebra over $\complex$ and let $V(\lambda)$ denote its irreducible finite dimensional representation, indexed by the  dominant integral weight $\lambda$. The extremal weight vectors of $V(\lambda)$ are the nonzero weight vectors whose weights are Weyl conjugates of $\lambda$. Let $W$ be the Weyl group and $W_\lambda$ the stabilizer of $\lambda$ in $W$.

Given $\lambda, \mu$, the tensor product $V(\lambda) \otimes V(\mu)$ has a family of submodules $\kkmod$ indexed by elements $w$ of the double coset space $W_\lambda \backslash W / W_\mu$. These are called {\em Kostant--Kumar modules}, and are cyclic modules generated by the tensor product of pairs of extremal vectors of $V(\lambda)$ and $V(\mu)$ \cite{KRV}.

 The main objective of this paper is to study the multiplicities $\lrw$ obtained when $\kkmod$ is decomposed into irreducibles:
 \[K(\lambda,w,\mu) = \bigoplus_{\nu \in P^+} V(\nu)^{\oplus \lrw}. \]
If $w_0$ denotes the longest element of the Weyl group, we have  $\kkmod[w_0] = V(\lambda) \otimes V(\mu)$. We may thus view $\lrw$ as a $w$-refined version of the tensor product multiplicity $\lr$, with $\lrw[w_0]$ coinciding with $\lr$.
Kostant--Kumar modules form an increasing filtration of the tensor product and for fixed $(\lmn)$, the multiplicity $\lrw$ increases as $w$ increases in the Bruhat order on double cosets. We refer the reader to Section~\ref{sec:kkmodules} for an explanation of undefined terms.

We first obtain an alternating sum formula for the $\lrw$ in all types which may be viewed as an analogue of the classical Brauer--Klimyk formula for the $\lr$. We then study the saturation and semigroup properties of the $\lrw$ in type $A$.
Our main theorem in this setting asserts that $w \in W\cong S_n$ has the saturation and strong semigroup properties if $w$ (when viewed as a permutation) avoids the patterns $312$ or $231$, or is a commuting product of such permutations in a Young subgroup.
Permutations of the special form above also appear in work of Postnikov--Stanley, where explicit formulas for the degree polynomials of the corresponding Schubert varieties were established \cite{PS}.

While our theorem generalizes Knutson--Tao's saturation theorem, our proof makes essential use of their {\em hive model} for the Littlewood--Richardson coefficients. 
Along the way, we derive a polyhedral description of the $\lrw$  in terms of the {\em Kogan faces} of the hive polytope.

It is worth mentioning here that there are now several different proofs of the Knutson--Tao saturation theorem in type $A$; for example Derksen--Weyman's proof via quiver varieties \cite{derksen-weyman} and Kapovich--Millson's proof via geodesic triangles in buildings \cite{kapovich-millson} among others. While Knutson--Tao's proof technique via hives is naturally amenable to $w$-refinement (at least, for the special $w$ we consider in this paper), it is much less obvious how the Weyl group element can be injected into the arguments of these other proofs.

The paper is organized as follows. Section~\ref{sec:kkmodules} reviews the definition of Kostant--Kumar modules. The main results of this section are a Brauer--Klimyk type formula for the refined tensor product multiplicities $\lrw$ (Proposition~\ref{prop:strongcor}) and the restriction theorems (Theorem~\ref{thm:lrwres} and Proposition~\ref{prop:w1w2mult}). 
Section~\ref{sec:semigpsatdef} defines the semigroup and saturation properties for general elements of the Weyl group and proposes a conjecture on existence of saturation factors (Conjecture~\ref{conj:sat-fact-exists}). Sections \ref{sec:demcrys}--\ref{sec:right-keys} are concerned exclusively with the type $A$ case. Section \ref{sec:demcrys} recalls a result of Fujita on Demazure crystals in the Gelfand-Tsetlin model. We employ this in Section~\ref{sec:hive-clmn} to derive a polyhedral  expression for $\lrw$ in terms of the {\rm Kogan faces} of the hive polytope (Theorem~\ref{thm:hive-desc-lrw}). Section~\ref{sec:patt312} states the key result of this paper (Theorem~\ref{thm:mainthm-312}) on the saturation and strong semigroup properties for certain classes of pattern-avoiding permutations, and uses results of Postnikov-Stanley \cite{PS} to derive yet another formula (Theorem~\ref{thm:postan312}) for the $\lrw$ when $w$ is of special form.  Sections~\ref{sec:inchives}, \ref{sec:hivehorncones} prove our saturation theorem via a modification of techniques of Knutson-Tao. Section~\ref{sec:strongthm} proves an extension of the saturation theorem to all permutations, at the expense of imposing restrictions on the triples $(\lambda,\mu,\nu)$. Finally, Section~\ref{sec:right-keys} provides a bijective proof of the equality $\lrw = c_{\mu\lambda}^\nu(w^{-1})$ in type $A$ via the hive model.

Some of the results of this paper were announced as part of an extended abstract in FPSAC 2021 \cite{KRV_Sat}.
\section{Kostant--Kumar modules and refined multiplicities in tensor products}\label{sec:kkmodules}

\subsection{}
Unless otherwise mentioned, we will let $\lie g$ denote a finite-dimensional simple Lie algebra over $\complex$. Let $\lie b$ be a fixed choice of Borel subalgebra. We let $ P, P^+, \Delta^+$ respectively denote the weight lattice and the sets of dominant integral weights and positive roots of $\lie g$. Let $W=W(\lie g)$ be the Weyl group of $\lie g$, $Q$ its root lattice and $Q^+ = \integers_{\geq 0}(\Delta^+)$.
Let $V(\lambda)$ denote the finite-dimensional irreducible representation of $\lie g$ indexed by its highest weight $\lambda \in P^+$.

Given $\lambda, \mu \in P^+$ and $\sigma \in W$, let $v_{\sigma\lambda}$, $v'_{\sigma\mu}$  denote nonzero vectors of weight $\sigma\lambda$ and $\sigma\mu$  in $V(\lambda)$ and $V(\mu)$ respectively. Let $\U \lie g$ be the universal enveloping algebra of $\lie g$.
Given $w \in W$, the {\em Kostant--Kumar module} $K(\lambda, w, \mu)$ is the cyclic $\lie g$-submodule of the tensor product $V(\lambda) \otimes V(\mu)$ generated by $v_\lambda \otimes v'_{w\mu}$ \cite{Kumar,KRV}.
\begin{equation}\label{eq:kkmoddef}
K(\lambda, w, \mu) = \U\lie g (v_\lambda \otimes v'_{w\mu}) \subseteq V(\lambda) \otimes V(\mu) 
\end{equation}

We now recall the following properties of Kostant--Kumar modules from \cite{KRV}.
For $\lambda \in P^+$, let $W_\lambda$ denote the stabilizer of $\lambda$ in $W$.
\begin{proposition}\label{prop:kkmod}
(\cite{KRV}) Let $\sigma, \tau \in W$ and  $\lambda, \mu \in P^+$.
\begin{enumerate}
\item $K(\lambda, w, \mu) = \U \lie g(v_{\sigma \lambda} \otimes v'_{\sigma w\mu})$ for all $\sigma \in W$.
\item  $K(\lambda,\sigma,\mu) = K(\lambda,\tau,\mu)$ if $W_\lambda \sigma W_\mu = W_\lambda \tau W_\mu$.
\item $K(\lambda,\sigma,\mu) \subseteq K(\lambda,\tau,\mu)$  if $W_\lambda \sigma W_\mu \leq W_\lambda \tau W_\mu$ in the Bruhat poset  $W_\lambda \backslash W /W_\mu$ of double cosets.
\item $K(\lambda,\sigma,\mu) \cong K(\mu,\sigma^{-1},\lambda)$. More precisely, $K(\lambda,\sigma,\mu)$ maps to $K(\mu,\sigma^{-1},\lambda)$ under the $\lie g$-isomorphism  $V(\lambda)\otimes V(\mu) \to V(\mu) \otimes V(\lambda)$ mapping $v_1 \otimes v_2 \mapsto v_2 \otimes v_1$.
\item $K(\lambda,\id,\mu) \cong V(\lambda+\mu)$ and $K(\lambda,w_0,\mu) = V(\lambda) \otimes V(\mu) $, where $\id$ denotes the identity and $w_0$ the longest element in $W$.
\end{enumerate}
\end{proposition}

We decompose $K(\lambda,w,\mu)$ into irreducibles:
\begin{equation}\label{eq:repthy-lrw} 
K(\lambda,w,\mu) = \bigoplus_{\nu \in P^+} V(\nu)^{\oplus \lrw}. 
\end{equation}
The multiplicities $\lrw$ so obtained are termed the $w$-refined tensor product multiplicities (or $w$-refined Littlewood--Richardson coefficients in type $A$).
In light of Proposition~\ref{prop:kkmod}, we obtain the decomposition of the full tensor product by setting  $w=w_0$:
\[ V(\lambda) \otimes V(\mu) = \bigoplus_{\nu \in P^+} V(\nu)^{\oplus \lr} \]
where $\lr = \lrw[w_0]$ are the tensor product multiplicities. When we need to emphasize the ambient Lie algebra $\lie g$, we will write $K(\lambda, w, \mu; \, \lie g)$ and $\lrw[w; \lie g]$ in place of $K(\lambda, w, \mu)$ and $\lrw$.

\subsection{} Proposition~\ref{prop:kkmod} directly implies the following key properties of the $\lrw$.
\begin{proposition}\label{prop:properties-clmn} Let $\lambda, \mu, \nu \in P^+$. Then
  \begin{enumerate}
  \item[(a)] $\lrw \in \zplus$ for all $w \in W$.
  \item[(b)] $\lrw[\sigma] = \lrw[\tau]$ if $W_\lambda \sigma W_\mu = W_\lambda \tau W_\mu$.
  \item[(c)] $\lrw[\sigma] \leq \lrw[\tau]$ if $W_\lambda \sigma W_\mu \leq W_\lambda \tau W_\mu$.
  \item[(d)] $\lrw = c_{\mu \lambda}^\nu(w^{-1})$ for all $w \in W$.
  \item[(e)] $\lrw[\wnot] = \lr$ and $\lrw[\id] = \delta_{\lambda+\mu, \nu}$.
   \end{enumerate}
\end{proposition}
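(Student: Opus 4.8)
The plan is to derive all five parts of Proposition~\ref{prop:properties-clmn} from the representation-theoretic interpretation \eqref{eq:repthy-lrw}, namely that $\lrw$ is the multiplicity of $V(\nu)$ in the Kostant-Kumar module $K(\lambda, w, \mu)$, combined with the structural facts about these modules already recorded in Proposition~\ref{prop:kkmod}. The point is that each property of the $\lrw$ should be a direct shadow of a corresponding property of the modules $K(\lambda, w, \mu)$, so the proof ought to be a short translation rather than a fresh computation. I would handle the parts essentially in the order (a), (b), (c), (e), (d), grouping those that follow immediately from $K$-module facts first.

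First, part (a) is immediate: a multiplicity of an irreducible in a finite-dimensional module is a nonnegative integer, so $\lrw \in \zplus$. For part (b), if $W_\lambda w W_\mu = W_\lambda w' W_\mu$, then Proposition~\ref{prop:kkmod}(1) gives $K(\lambda, w, \mu) = K(\lambda, w', \mu)$ as $\lie g$-modules; equal modules have equal multiplicities of $V(\nu)$, so $\lrw = \lrw[w']$ by \eqref{eq:repthy-lrw}. For part (c), I would first note the standard fact that the Bruhat order on $S_n$ descends to the Bruhat order on the double coset poset $W_\lambda \backslash W / W_\mu$, so $w \leq w'$ in $S_n$ implies $W_\lambda w W_\mu \leq W_\lambda w' W_\mu$; then Proposition~\ref{prop:kkmod}(2) gives the module inclusion $K(\lambda, w, \mu) \subseteq K(\lambda, w', \mu)$, and multiplicities of $V(\nu)$ can only increase under inclusion of one module in another (using complete reducibility of finite-dimensional $\lie{gl}_n$-modules), yielding $\lrw \leq \lrw[w']$. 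Part (e) is just a restatement: $\lrw[\wnot] = \lr$ and $\lrw[1] = \delta_{\lambda+\mu,\nu}$ were already observed immediately after the definition \eqref{eq:wreflr}, and they also follow from Proposition~\ref{prop:kkmod}(4), since $K(\lambda, 1, \mu) \cong V(\lambda+\mu)$ and $K(\lambda, \wnot, \mu) = V(\lambda) \otimes V(\mu)$.

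Part (d) is the one requiring the symmetry $K(\lambda, w, \mu) = K(\mu, w^{-1}, \lambda)$ from Proposition~\ref{prop:kkmod}(3). Taking multiplicities of $V(\nu)$ on both sides and applying \eqref{eq:repthy-lrw} to each gives $\lrw = c_{\mu\lambda}^{\,\nu}(w^{-1})$ directly. The only subtlety is a consistency check: the identity involves swapping the roles of $\lambda$ and $\mu$ and inverting $w$, and I would make sure the stabilizer conventions match, but there is no real computation involved.

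I do not expect a genuine obstacle here, since the heavy lifting is done by Proposition~\ref{prop:kkmod} and Kumar's character formula. The one point deserving a sentence of justification rather than a mere citation is the claim in part (c) that the Bruhat order on $S_n$ induces the Bruhat order on double cosets $W_\lambda \backslash W / W_\mu$ — one should confirm that the minimal-length double coset representatives respect the order, so that $w \le w'$ in $S_n$ indeed forces $W_\lambda w W_\mu \le W_\lambda w' W_\mu$. This is a known fact about Coxeter groups, and I would either cite it or include a one-line argument via minimal coset representatives; apart from that, the whole proposition is a clean corollary of the preceding results.
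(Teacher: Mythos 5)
Your proof is correct and follows essentially the same route as the paper: the paper likewise dispatches (a) via the multiplicity interpretation \eqref{eq:repthy-lrw}, (b)--(d) via Proposition~\ref{prop:kkmod}, and (e) directly from \eqref{eq:wreflr}, only more tersely. Your added care on part (c) --- checking that $w \le w'$ in Bruhat order on $S_n$ implies $W_\lambda w W_\mu \le W_\lambda w' W_\mu$ in the double coset poset --- is a genuine detail the paper leaves implicit, and your appeal to the standard fact about minimal-length double coset representatives handles it correctly.
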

We recall that if $\sigma, \tau \in W$ with $\sigma \leq \tau$ in the Bruhat order, then 
$W_\lambda \sigma W_\mu \leq W_\lambda \tau W_\mu$ in the Bruhat order on the double coset space \cite{KRV}.
Thus, Proposition~\ref{prop:properties-clmn}(c) shows that for all $\lambda, \mu, \nu \in P^+$, the map $w \mapsto \lrw$ is an increasing function of posets $W \to \zplus$. Figure~\ref{fig:wrefined-values} shows  an example for $\lie g = \mathfrak{sl}_4$, with the values $\lrw$ superimposed on the Bruhat graph of $W=S_4$.
A bijective proof of Proposition~\ref{prop:properties-clmn}(d) for $\lie g = \mathfrak{sl}_n$ using the hive model occurs below in \S\ref{sec:right-keys}.

\begin{figure}
\centering
\includegraphics[width=0.5\textwidth,angle=270]{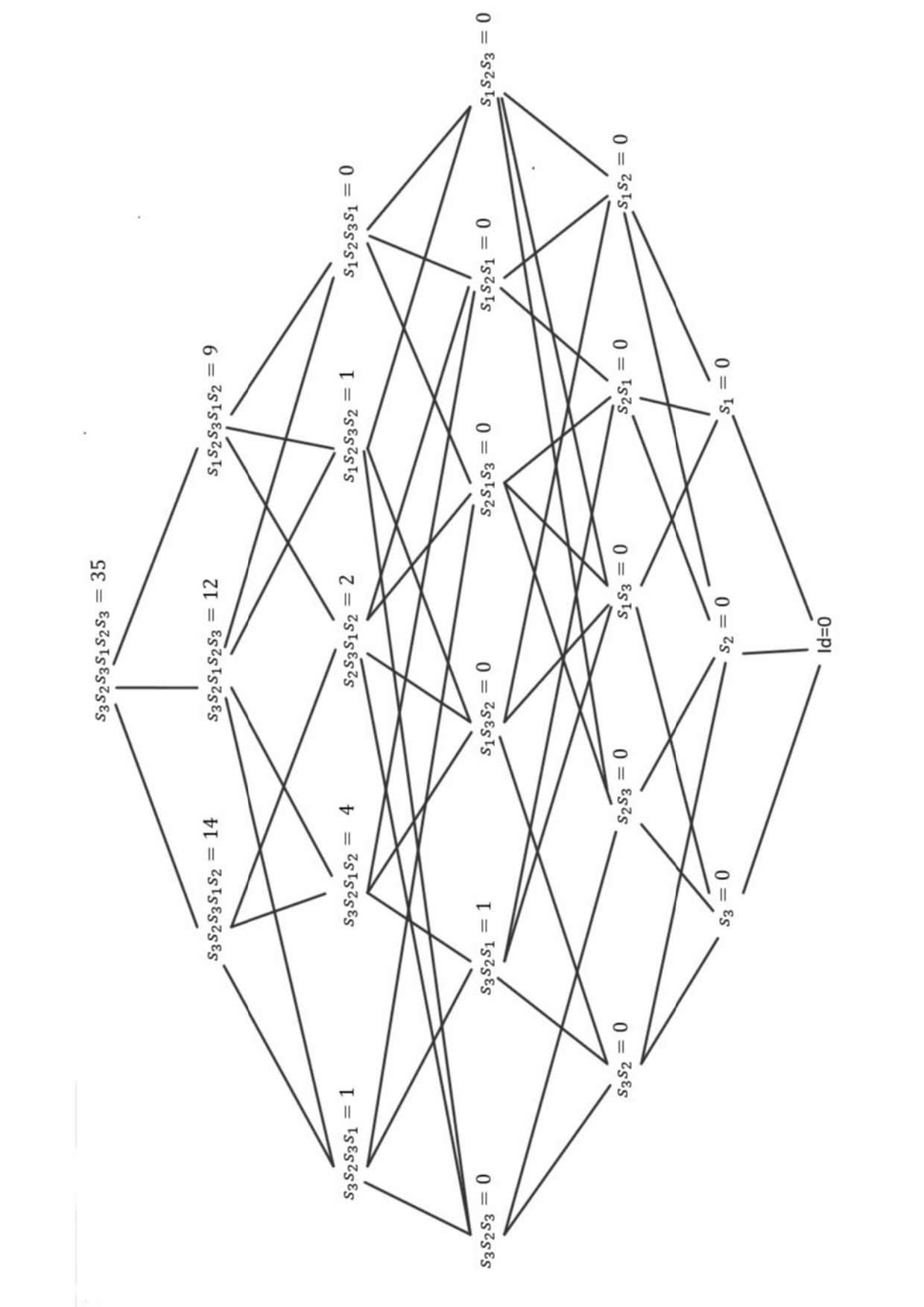}
\caption{Values of $\lrw$ (superimposed on the Bruhat graph of $S_4$) for   $\lambda = (13, 7, 4, 0), \;\; \mu = (13, 7, 2, 0), \;\; \nu = (21, 12, 9, 4)$.}

\label{fig:wrefined-values}
\end{figure}

\subsection{}
The notion of Kostant--Kumar modules has a straightforward extension to the setting of $\lie g$ semisimple (rather than simple). It is defined exactly as before, and is given by Equation~\eqref{eq:kkmoddef}. It has an alternate description in terms of the simple components of $\lie g$, and we record this for future use below.

Writing $\lie g = \oplus_{i=1}^r \lie g_i$ with $\lie g_i$ simple, the Cartan subalgebra $\lie h = \oplus_{i=1}^r \lie h_i$.
Given dominant integral weights $\lambda, \mu$ of $\lie g$, define $\lambda_i, \mu_i$ to be the restrictions of $\lambda, \mu$ to $\lie h_i$  for $i=1, \ldots, r$. Thus,  $\lambda = \sum_{i=1}^r \lambda_i$ and $\mu = \sum_{i=1}^r \mu_i$.  The corresponding irreducible representations of $\lie g$ are isomorphic to the external tensor products 
\[ V(\lambda) \cong \displaystyle\boxtimes_{i=1}^r V(\lambda_i), \;\;\; V(\mu) \cong \boxtimes_{i=1}^r V(\mu_i).\]
Given $w = \prod_{i=1}^r w_i \in W(\lie g)$ with $w_i \in W(\lie g_i)$, it is easy to see that the Kostant--Kumar module $K(\lambda, w, \mu)$ of $\lie g$ is isomorphic to the external tensor product of Kostant--Kumar modules for the $\lie g_i$:
\begin{equation}\label{eq:kktensor}
    K(\lambda, w, \mu) \cong \boxtimes_{i=1}^r K(\lambda_i, w_i, \mu_i; \, \lie g_i)
\end{equation}
Given a dominant integral weight $\nu = \sum_{i=1}^r \nu_i$ of $\lie g$ with $\nu_i:=\nu|_{\lie h_i}$,  we obtain from \eqref{eq:kktensor} that:
\begin{equation}\label{eq:cprod}
    \lrw = \dprod_{i=1}^r c_{\lambda_i, \mu_i}^{\nu_i}(w_i; \, \lie g_i)
\end{equation}
 
\subsection{} We return to our standing assumption that $\lie g$ is simple. Let $S=\{1, 2, \ldots, n\}$ index the nodes of the Dynkin diagram of $\lie g$. For $i \in S$, let $\alpha_i$ denote the corresponding simple root, $\alpha_i^\vee$ the simple coroot, $e_i, f_i$ the Chevalley generators and  $s_i \in W$ the simple reflection. 
Let $\rho=\sum_{\alpha \in \Delta^+} \alpha/2$ denote the Weyl vector.

Given $w \in W$, fix a reduced expression for $w$ and let $\supp(w)$ denote the set of all $i \in S$ such that $s_i$ occurs in the chosen reduced expression of $w$ (this is independent of the choice of reduced expression). 

Likewise, given an element $\beta = \sum_{i \in S} c_i \alpha_i \in \lie h^*$, let $\supp(\beta)$ denote the set of $i \in S$ such that $c_i \neq 0$.   
Let  $Q_I:=\{\alpha \in Q: \supp(\alpha) \subset I\}$ and $Q^+_I = Q_I \cap Q^+$.

We recall the following simple fact:
\begin{lemma}\label{lem:simplem}
Let $\mu \in P^+$ and let $w \in W$ with $I=\supp(w)$. Then $\mu-w\mu \in Q^+_I$. Further, if $\mu$ is regular dominant, i.e., $\mu \in (\rho+P^+)$, then $\supp(\mu-w\mu) = I$.
\end{lemma}

\subsection{}
Given $w \in W$ and $\mu \in P^+$, the {\em Demazure module} $V_w(\mu)$ is the cyclic $\U\lie b$-submodule of $V(\mu)$ generated by a nonzero vector $v'_{w\mu}$ of weight $w\mu$. This section collects together well-known facts about Demazure modules; brief proofs are indicated where appropriate, since there is no single convenient reference 
we can point the reader to for the facts stated below.

The character $\dchar$ of $V_w(\mu)$ is defined by 
\[\dchar = \sum_{\alpha \in Q} d_{w,\mu}(\alpha) \,e^{\mu-\alpha}\] 
where $d_{w,\mu}(\alpha) := \dim V_w(\mu)_{\mu-\alpha}$ is the dimension of the 
$(\mu-\alpha)$-weight space of $V_w(\mu)$. If we wish to emphasize the role of the ambient algebra $\lie g$, we will write $d_{w,\mu}(\alpha; \,\lie g)$ and $\dchar[\mu; \, \lie g]$ in place of $d_{w,\mu}(\alpha)$ and $\dchar$.
Since every weight $\gamma$ of $V_w(\mu)$ satisfies $w\mu \leq \gamma \leq \mu$, Lemma~\ref{lem:simplem} implies the following statement:
\begin{lemma}\label{lem:demwt} Let $\mu \in P^+$ and $w \in W$ with $\supp(w) = I$. If
   $d_{w,\mu}(\alpha) > 0$, then $\alpha \in Q^+_I$. 
\end{lemma}
The Demazure character formula states that  
\begin{equation}\label{eq:demcharformula} 
\dchar = \dem_w(e^\mu) 
\end{equation} where $T_w: \complex[P] \to \complex[P]$ is the Demazure operator, defined as follows. For a simple reflection $s_i$ and $\gamma \in P$, let 
\begin{equation}\label{eq:twdef}
\dem_i(e^\gamma) = \frac{e^\gamma - e^{s_i(\gamma)-\alpha_i}}{1 - e^{-\alpha_i}} \end{equation}
If $w= s_{i_1} s_{i_2} \cdots s_{i_k}$ is a reduced word for $w \in W$, define $T_w := T_{i_1} T_{i_2} \cdots T_{i_k}$. This is independent of the choice of reduced word.

\subsection{}
Given $I \subset S$, let $\lie h_I$ denote the span of the coroots $\{\alpha_i^\vee: i \in I\}$. For $\mu \in \lie h^*$, let $\mu_I \in \lie h_I^*$ denote its restriction to $\lie h_I$. 
Let ${\lie g}_I$ denote the (semisimple) Lie subalgebra of $\lie g$ generated by $\lie h_I$ and $e_i, f_i$ for $i \in I$. Let $W_I:=\{w \in W: \supp(w) \subset I\}$ denote the parabolic subgroup of $W$ generated by $s_i$, $i \in I$. We will identify $W_I$ with the Weyl group of $\lie g_I$. Likewise, we identify the weight lattice of $\lie g_I$ with $P_I:=\{\mu_I: \mu \in P\}$.

The simple coroots of $\lie g_I$ are $\alpha_i^\vee, \; i \in I$. Let $\alpha_i^I \in \lie h_I^*, \, i \in I$ denote the simple roots of $\lie g_I$. Since $\langle \alpha_i, \alpha_j^\vee\rangle = \langle (\alpha_i)_I, \alpha_j^\vee \rangle$ for all $i, j \in I$, we see that $\alpha_i^I$ coincides with the restriction $(\alpha_i)_I$ of $\alpha_i$ to $\lie h_I$. Thus, the restriction map $\alpha \mapsto \alpha_I$ defines an isomorphism from $Q_I=\{\alpha \in Q: \supp(\alpha) \subset I\}$ to the root lattice $Q(\lie g_I)$ of $\lie g_I$. We will identify $Q_I$ and $Q(\lie g_I)$ via this map.

\begin{lemma}\label{lem:sres}
    For $\mu \in P$, $(s_{\alpha_i}(\mu))_I = s_{\alpha_i^I}(\mu_I)$ for all $i \in I$. More generally, $(w\mu)_I = w(\mu_I)$ for all $w \in W_I$.
\end{lemma}
\begin{proof}
We compute $(s_{\alpha_i}(\mu))_I = (\mu - \langle \mu, \alpha_i^\vee \rangle \alpha_i)_I = \mu_I - \langle \mu, \alpha_i^\vee \rangle (\alpha_i)_I$. The first assertion now follows from  $(\alpha_i)_I = \alpha_i^I$ and $\langle \mu, \alpha_i^\vee \rangle = \langle \mu_I, {\alpha_i}^\vee \rangle$ for all $i \in I$. To obtain the second assertion, we write $w$ as a product of simple reflections and iteratively use the first part.
\end{proof}

Consider the restriction map $\res_I: \complex[P] \to \complex[P_I]$ defined by $\res_I(e^\mu) = e^{\mu_I}$. The following result states that this map commutes with the Demazure operators $T_i$ for $i \in I$.
\begin{proposition}\label{prop:commrel}
$\res_I \circ T_i = T_i \circ \res_I$ for all $i \in I$.
\end{proposition}
\begin{proof}
Let $\mu \in P$ and $i \in I$. Let $k = \langle \mu, \alpha_i^\vee \rangle = \langle \mu_I, \alpha_i^\vee \rangle$. If $k\geq 0$, then $T_i(e^\mu) = \sum_{p=0}^{k} e^{\mu-p\alpha_i}$ and $T_i (e^{\mu_I}) = \sum_{p=0}^k e^{\mu_I-p\alpha_i^I}$. Since $\res_I(e^{\alpha_i}) = e^{\alpha_i^I}$, this establishes the result. The $k<0$ case is similar.
   
\end{proof}

\begin{corollary}\label{cor:rest}
Let $\mu \in P^+$, $w \in W$ with $I:=\supp(w)$. Then
\begin{equation}\label{eq:demres}
    \dchar[\mu_I;\, \lie g_I]= \res_I (\dchar).
\end{equation}
Equivalently
\[d_{w,\mu}(\alpha) = d_{w,\mu_I}(\alpha_I; \,\lie g_I) \text{ for all }\alpha \in Q_I.\]
\end{corollary}
\begin{proof}
Proposition~\ref{prop:commrel} implies that $\res_I(T_w(e^\mu)) = T_w(e^{\mu_I}) = \dchar[\mu_I;\, \lie g_I]$, establishing \eqref{eq:demres}. The second assertion now follows easily from this.
 \end{proof}

\subsection{} We recall that the Kostant--Kumar module may be described alternately as follows \cite{KRV}:
\[ K(\lambda, w, \mu) = \U\lie g(v_{\lambda} \otimes V_w(\mu)).\]
The character of $K(\lambda, w, \mu)$ was computed by Kumar \cite{Kumar}.
\begin{thm} (Kumar) Let $w \in W$ and $\lambda, \mu \in P^+$. Then
  \begin{equation}\label{eq:charkk}
    \ch K(\lambda,w,\mu) = \dem_{\wnot}(e^\lambda \, \dchar) = \frac{\displaystyle\sum_{w \in W} \sgn(w) \, w\left( e^{\lambda + \rho} \,\dchar\right)}{e^\rho \displaystyle\prod_{\alpha \in \Delta^+} (1-e^{-\alpha})}
    \end{equation}
where $\sgn$ denotes the sign character of $W$, $\wnot \in W$ is the longest element and $\rho$ is the Weyl vector. 
\end{thm}
\begin{corollary} \label{cor:lrwformula}
For $\lambda, \mu, \nu \in P^+$ and $w \in W$, we have
\[ \lrw = \sum_{u \in W} \sgn(u)\, d_{w,\mu}((\lambda+\mu+\rho)-u(\nu+\rho)). \]
  \end{corollary}
When $w=w_0$, this formula reduces to the classical formula for tensor product multiplicities that is attributed variously to Steinberg, Brauer--Klimyk or Racah-Speiser \cite{kumar-icm2010,fulton-harris}. We omit the proof of Corollary~\ref{cor:lrwformula}, which closely follows the derivation in \cite[Theorem 3.3]{kumar-icm2010} of the formula for $\lr$  from the Weyl character formula\footnote{We remark that the operator $D$ of \cite[Theorem 3.3]{kumar-icm2010} is precisely our $T_{w_0}$.}.

The following strengthening of the above corollary is our first main result:
\begin{proposition}\label{prop:strongcor}
    Let $\lambda, \mu, \nu \in P^+$ and $w \in W$ with $\supp(w) = I$. Then $\lrw=0$ unless $(\lambda+\mu-\nu) \in Q_I$. Further,
\begin{equation}\label{eq:strongeq}
\lrw = \sum_{u \in W_I} \sgn(u)\, d_{w,\mu}((\lambda+\mu+\rho)-u(\nu+\rho))
\end{equation}
i.e., with the sum ranging over $W_I$ rather than over all of $W$.
\end{proposition}
\begin{proof}
 Consider $\alpha = (\lambda+\mu+\rho)-u(\nu+\rho)$.
 In view of Lemma~\ref{lem:demwt}, the term $d_{w,\mu}(\alpha)>0$ only if $\alpha \in Q^+$ with $\supp(\alpha) \subset I$. Now rewriting
 \[ \alpha = (\lambda+\mu-\nu) + \left(\nu+\rho - u(\nu+\rho)\right).\]
If $\lrw >0$, then $\lr >0$ and hence $\lambda+\mu-\nu \in Q^+$. Lemma~\ref{lem:simplem} implies that the second summand in the above equation is also in $Q^+$. Thus, $\supp(\alpha) \subset I$ implies that $\supp(\lambda+\mu-\nu) \subset I$, and by another appeal to Lemma~\ref{lem:simplem}, that $\supp(u) =\supp\left(\nu+\rho - u(\nu+\rho)\right) \subset I$. 
\end{proof}
Proposition~\ref{prop:strongcor} readily implies $w$-refined analogues of many results for $\lr$. For instance, the following result is a generalization of  \cite[Corollary (3.4)]{kumar-icm2010}.
\begin{corollary}
Let $\lambda, \mu, \nu \in P^+$ and $w \in W$ with $I=\supp(w)$.
    Let $\mathcal{P}=\mathcal{P}(\lambda,w,\mu)$ denote the set of weights of the $\lie b$-module $v_\lambda \otimes V_w(\mu)$ and let
    \begin{align*}
        n(\mathcal{P}) &= \min\{\langle \gamma, \alpha_i^\vee \rangle: \gamma \in \mathcal{P}, i \in I\} \in \integers, \\
        n(\nu) &= \min\{\langle \nu, \alpha_i^\vee \rangle: i \in I\} \geq 0.
        \end{align*}
    If $n(\mathcal{P}) + n(\nu) +1 \geq 0$, then $\lrw = \dim (V_w(\mu))_{\nu-\lambda}$.
\end{corollary}
 \begin{proof}
It suffices to prove that the summand $d_{w,\mu}((\lambda+\mu+\rho)-u(\nu+\rho))$ in \eqref{eq:strongeq} vanishes when $u \neq 1$. Let $\alpha=\lambda+\mu+\rho-u(\nu+\rho)$. By definition, $d_{w,\mu}(\alpha)$ vanishes precisely when $\mu-\alpha$ is not a weight of $V_w(\mu)$, or equivalently when $\lambda+(\mu-\alpha) \notin \mathcal{P}$.

Since $\nu+\rho$ is regular dominant and $u \in W_I, u \neq 1$, there exists $i \in I$ such that \[\langle u(\nu+\rho), \alpha_i^\vee\rangle = \langle \nu+\rho, u^{-1}(\alpha_i^\vee)\rangle <0.\] Thus $u^{-1}(\alpha_i^\vee)$ is a negative coroot, and for any $j \in I$ in its support we obtain
\[\langle \nu+\rho, u^{-1}(\alpha_i^\vee)\rangle \leq -\langle \nu+\rho, \alpha_j^\vee\rangle \leq -n(\nu)-1 \leq n(\mathcal{P}).\] 
Thus $\langle u(\nu+\rho)-\rho, \alpha_i^\vee\rangle \leq n(\mathcal{P})-1$. This implies by minimality of $n(\mathcal{P})$ that $u(\nu+\rho)-\rho \notin \mathcal{P}$.  Since $\lambda+(\mu-\alpha) = u(\nu+\rho)-\rho$, the proof is complete.
\end{proof}
\begin{remark}
    If $n(\mathcal{P}) \geq -1$, then $n(\mathcal{P})+n(\nu)+1 \geq 0$ for all $\nu \in P^+$ and the conclusion holds uniformly for all $\nu$. This reduces to the formulation of  \cite[Corollary (3.4)]{kumar-icm2010} for the $w=w_0$ case.
\end{remark}
\subsection{}
Given $\lambda, \mu, \nu \in P^+$ and $w \in W$ with $I=\supp(w)$, define $\delta_{\lambda\mu}^\nu(w) = 1$ if $(\lambda+\mu-\nu) \in Q_I$ and  $0$ otherwise. 
\begin{thm}\label{thm:lrwres}
Let $\lambda, \mu, \nu \in P^+$ and $w \in W$, with $I=\supp(w)$. Then
\begin{equation}\label{eq:lrlr-res}
    \lrw \;=\; \delta_{\lambda\mu}^\nu(w) \, c_{\lambda_I,\, \mu_I}^{\nu_I}(w; \, \lie g_I)
\end{equation}
\end{thm}
\begin{proof}
    If $\delta_{\lambda\mu}^\nu(w)=0$, then $\lrw=0$ by Proposition~\ref{prop:strongcor}. We may assume that $\delta_{\lambda\mu}^\nu(w)=1$, i.e., $\lambda+\mu-\nu \in Q_I$. This implies that $\alpha = (\lambda+\mu-\nu) + \left(\nu+\rho - u(\nu+\rho)\right) \in Q_I$. By Corollary~\ref{cor:rest}, we obtain $d_{w,\mu}(\alpha) = d_{w,\mu_I}(\alpha_I; \, \lie g_I)$. From Lemma~\ref{lem:sres}, we have $\left(u(\nu+\rho)\right)_I = u(\nu_I + \rho_I)$. Putting these together in Proposition~\ref{prop:strongcor}, $\lrw$ becomes
    \[\sum_{u \in W_I} \sgn(u)\, d_{w,\mu_I}((\lambda_I+\mu_I+\rho_I)-u(\nu_I+\rho_I))\]
    which coincides precisely with $c_{\lambda_I,\, \mu_I}^{\nu_I}(w; \, \lie g_I)$. 
    \end{proof}
    We also have the following lifting lemma that establishes a converse to the above statement.
    \begin{lemma}\label{lem:convlift}
        Let $(\lambda', \mu', \nu') \in P^+(\lie g_I)^3$ such that $(\lambda'+\mu'-\nu') \in Q^+(\lie g_I)$. Then, there exists $(\lambda,\mu,\nu) \in (P^+)^3$ with 
        $(\lambda+\mu-\nu) \in Q^+_I$ and  $\lambda_I=\lambda', \,\mu_I=\mu', \,\nu_I=\nu'$. Further, $\lrw = c_{\lambda'\mu'}^{\nu'}(w; \, \lie g_I)$. 
    \end{lemma}
    \begin{proof}
Let $\alpha=\lambda'+\mu'-\nu' =\sum_{i \in I} c_i \alpha_i \,\in Q(\lie g_I)$. We may view $\alpha$ as an element of $Q_I$ under our identification $Q_I \cong Q(\lie g_I)$. Define $\lambda \in P^+$ via $\langle \lambda, \alpha_j^\vee \rangle =\langle \lambda', \alpha_j^\vee \rangle$ if $j\in I$ and zero if $j \in S\backslash I$. Likewise define $\mu\in P^+$ via $\langle \mu, \alpha_j^\vee \rangle =\langle \mu', \alpha_j^\vee \rangle$ if $j\in I$ and zero otherwise. Finally, let \[\nu = \lambda+\mu - \alpha.\]
We note that $\nu_I = \lambda_I + \mu_I - \alpha_I =\nu'$ by definition of $\alpha$. Since $\supp(\alpha) \subset I$, we have $\langle \alpha, \alpha_j^\vee \rangle \leq 0$ for all $j \notin I$. This proves $\nu \in P^+$ and completes this argument. The equality $\lrw = c_{\lambda'\mu'}^{\nu'}(w; \, \lie g_I)$ now follows from Theorem~\ref{thm:lrwres}.
    \end{proof}
Thus, the nonzero $w$-refined tensor product multiplicities of the ambient algebra $\lie g$ coincide with those of the subalgebra $\lie g_I$ for $I=\supp(w)$.  
\subsection{}\label{sec:wcomps}
Viewing $I:=\supp(w)$ as a subset of the nodes of the Dynkin diagram of $\lie g$, we may identify it with the induced subgraph it defines.  
Suppose $I_j \subseteq S$ ($j=1, \cdots, r$) are the  connected components of $I$.
Let $W_j$ denote the subgroup of $W$ generated by $\{s_i: i \in I_j\}$. We can write $w = \prod_{j=1}^r w_j$ where $w_j \in W_j$; note that $w_j$ and $w_k$ commute for $j \neq k$. 

Let $\lie g_j = \lie g_{I_j}$ and $\lie h_j = \lie h_{I_j}$. For the weights $\lmn, \rho$ of $\lie g$, let $\lambda_j, \mu_j, \nu_j, \rho_j$ denote their restrictions to $\lie h_j$.  We now have the following important proposition.
\begin{proposition}\label{prop:w1w2mult}
    Let notation be as above. Then 
    \[ \lrw = \delta_{\lambda\mu}^\nu(w) \, \prod_{i=1}^r c_{\lambda_i\mu_i}^{\nu_i}(w_i; \lie g_i) \]
    where $\delta_{\lambda\mu}^\nu(w) = 1$ if $(\lambda + \mu - \nu) \in Q_I$ and $0$ otherwise.
\end{proposition}
\begin{proof}
Noting that $\lie g_I$ is in general semisimple, it is easily seen that Proposition~\ref{prop:w1w2mult} follows from Theorem~\ref{thm:lrwres} and Equation~\eqref{eq:cprod}.
 \end{proof}
\subsection{}\label{sec:combexcellent}
The $\lrw$ are also related to the {\em combinatorial excellent filtrations} of Demazure modules and have descriptions in terms of Lakshmibai-Seshadri paths \cite{LLM} or crystals \cite{Joseph1, Joseph-birthday}.
We briefly recall the Joseph decomposition rule for Kostant--Kumar modules in terms of crystals \cite{Joseph-birthday,KRV}. We will apply these notions in type $A$ in the later sections of the paper.

We refer the reader to \cite{bump-schilling} for the general theory of crystals of finite dimensional representations of $\lie g$. For each $\lambda \in P^+$, let $\crys(\lambda)$ denote the crystal corresponding to the irreducible representation $V(\lambda)$; this is the (unique upto isomorphism) connected crystal with an element $b_\lambda$ satisfying
\[\wt b_\lambda = \lambda \text{ and } e_i b_\lambda = 0  \text{ for all } i \in S.\]
Given $w \in W$, let $\crys_w(\lambda)$ denote the Demazure crystal indexed by $w$. This is defined by:
\begin{equation}\label{eq:demcrygentype}
  \crys_w(\lambda) :=\{f_{i_1}^{m_1} f_{i_2}^{m_2} \cdots f_{i_r}^{m_r} b_\lambda: m_j \geq 0\}
  \end{equation}
where $w=s_{i_1} s_{i_2} \cdots s_{i_r}$ is a reduced expression for $w$. Here $e_i, f_i$ denote the crystal raising and lowering operators. We note that for $b \in \crys_w(\lambda)$, its weight $\wt b = \lambda - \sum_{j=1}^r m_{j} \alpha_{i_j}$. We have
\begin{equation}\label{eq:lamwtb}
(\lambda - \wt b) \in \integers_+\text{-span} \{\alpha_i: i \in \supp(w)\}.
\end{equation}
The character of $\crys_w(\lambda)$ coincides with that of the Demazure module $V_w(\lambda)$. 

Given $\lambda, \mu \in P^+$ and $w \in W$, the decomposition rule for the Kostant--Kumar module $K(\lambda,w,\mu)$ was given by Joseph (for simply-laced $\lie g$; see \cite{KRV} for the non-simply laced case) in \cite[Theorem 5.25, (4.6*)]{Joseph-birthday}:
\begin{equation}\label{eq:lskkdecomp}
  K(\lambda,w,\mu) = \bigoplus V(\wt b)
\end{equation}
where the sum runs over $b \in b_\lambda \otimes \crys_w(\mu)$ such that $e_i b  =0$ for all  $i \in S$. 
We thereby obtain: 
\begin{proposition}
Let $\lie g$ be a finite-dimensional simple Lie algebra with $S$ indexing its simple roots. Let $\lmn \in P^+, w \in W$. Then: 
\begin{equation}\label{eq:lrwcrys}
\lrw = \# \{b \in b_\lambda \otimes \crys_w(\mu):  e_i b  =0 \; \forall  i \in S \text{ and } \wt b = \nu\} 
\end{equation}
\end{proposition}

\begin{corollary}\label{cor:scaling}
    Let $\lambda, \mu, \nu \in P^+$ and $w \in W$. If $\lrw >0$, then $\lrwk>0$ for all $k \geq 1$.
\end{corollary}
\begin{proof}
   If $\lrw>0$, it follows from \eqref{eq:lrwcrys} that there exists $b' \in \crys_w(\mu)$ such that $b=b_\lambda \otimes b'$ satisfies 
    $e_i b  =0$ for all  $i \in S$ and $\wt b = \nu$. Suppose $b'=f_{i_1}^{m_1} f_{i_2}^{m_2} \cdots f_{i_r}^{m_r} b_\mu$. For $k \geq 1$, define $b'_k:=f_{i_1}^{km_1} f_{i_2}^{km_2} \cdots f_{i_r}^{km_r} \,b_{k\mu} \in \crys_w(k\mu)$. It is easily checked that \[b_k:=b_{k\lambda} \otimes b'_k\] satisfies $e_i b  =0$ for all  $i \in S$ and $\wt b = k\nu$. Thus $\lrwk>0$.
\end{proof}
\begin{remark}\label{rem:scaling}
    In the above proof, one can also show (for instance via Littelmann's path crystal \cite{LittelmannAnnals}) that the map $b' \mapsto b'_k$ is injective for each $k \geq 1$, and thereby establish that $\lrwk \geq \lrw$. We give an alternate proof of this fact below in type $A$ using the hive model (Corollary~\ref{cor:corscaling}).
\end{remark}

\section{Saturation and semigroup properties}\label{sec:semigpsatdef}
Let $\lie g$ be a finite-dimensional semisimple Lie algebra.
\subsection{}
 Let $\mct$ denote the set of triples $(\lambda, \mu, \nu) \in (P^+)^3$ such that $\lambda+\mu - \nu \in Q$.
 \begin{definition}\label{def:semigp}
   An element $w \in W$ is said to have the {\em semigroup property} if 
\begin{equation} \label{eq:semigpprop-def}
c_{\lambda_1,\,\mu_1}^{\,\nu_1}(w) >0 \text{ and } c_{\lambda_2,\,\mu_2}^{\,\nu_2}(w) >0 \text{ implies } c_{{\lambda_1+\lambda_2},\, {\mu_1+\mu_2}}^{\,\nu_1+\nu_2}(w) >0
\end{equation}
for all $(\lambda_i, \mu_i, \nu_i) \in \mct$, $i=1,2$.
\end{definition}
We will also have occasion to consider the following stronger variant:
\begin{definition}\label{def:semigp-strong}
   An element $w \in W$ is said to have the {\em strong semigroup property} if \\
$c_{\lambda_1,\,\mu_1}^{\,\nu_1}(w) >0$  and  $c_{\lambda_2,\,\mu_2}^{\,\nu_2}(w) >0$ implies  \[c_{{\lambda_1+\lambda_2},\, {\mu_1+\mu_2}}^{\,\nu_1+\nu_2}(w) \geq \max\left(c_{\lambda_1,\,\mu_1}^{\,\nu_1}(w),\; c_{\lambda_2,\,\mu_2}^{\,\nu_2}(w)\right)\]
for all $(\lambda_i, \mu_i, \nu_i) \in \mct$, $i=1,2$.
\end{definition}

\begin{definition}\label{def:sat}
   An element $w \in W$ is said to have the {\em saturation property} if the following holds for all $(\lambda, \mu, \nu) \in \mct$:
\begin{equation} \label{eq:satprop}
\lrwk > 0 \text{ for some } k\geq 1 \text{ implies } \lrw > 0.
\end{equation}
More generally, an integer $d \geq 1$ is said to be a {\em saturation factor} for $w \in W$ if the following holds for all $(\lambda, \mu, \nu) \in \mct$:
\begin{equation} \label{eq:satprop-factor}
\lrwk > 0 \text{ for some } k\geq 1 \text{ implies } \lrwd > 0.
\end{equation}
\end{definition}
\begin{remark}\label{rem:satsat}
In view of Corollary~\ref{cor:scaling}, if $d$ is a saturation factor for $w$, then so is any positive integer multiple of $d$. 
\end{remark}
Kapovich-Millson \cite{kapovich-millson} established that $w=w_0$ (i.e., the tensor product multiplicity case) has a saturation factor for all finite-dimensional semisimple Lie algebras. Their saturation factor is $k_{\lie g}^2$ where $k_{\lie g}$ is the LCM of the coordinates of the highest root of $\lie g$ in the basis of simple roots. We conjecture the existence result for all $w$:

\begin{conjecture}\label{conj:sat-fact-exists}
Let $\lie g$ be a finite-dimensional semisimple Lie algebra and let $w \in W(\lie g)$. Then there exists a saturation factor for $w$.
\end{conjecture}

We note that $w=1$ has the saturation and strong semigroup properties since $\lrw[\id] = \delta_{\lambda+\mu, \,\nu}$. It is well-known that $w_0$ has the semigroup property in all types \cite{Zel,elashvili}. In fact, it also has the strong semigroup property, as follows from \cite[Lemma (3.9)]{kumar-icm2010}.
Finally, in type $A$, $w=\wnot$  has the saturation property - this is exactly the  Knutson--Tao saturation theorem \cite{KTHoneycombs}. 

In type $A$, we establish below (Theorem~\ref{thm:mainthm-312}) that a certain class of pattern-avoiding permutations  $w$ have both the saturation and strong semigroup properties.
\subsection{}
Given $w \in W$, let $I=\supp(w)$. We may consider the semisimple Lie algebra $\lie g_I$ and view $w$ as an element of its Weyl group $W_I=W(\lie g_I)$. 
 
\begin{proposition}\label{prop:ambient-vs-res}
Let $w \in W$ with $I=\supp(w)$. Then  $w \in W$ has the saturation  property for the ambient Lie algebra $\lie g$ if and only if $w \in W(\lie g_I)$ has the saturation  property for $\lie g_I$. Likewise, $w \in W$ has the semigroup (resp. strong semigroup) property if and only if $w \in W(\lie g_I)$ has the semigroup (resp. strong semigroup) property.
\end{proposition}
\begin{proof}
This follows from Theorem~\ref{thm:lrwres} and the linearity of the lifts constructed in Lemma~\ref{lem:convlift}. 
\end{proof}
\begin{example}
    Let $\lie g = B_n$ for $n \geq 2$. We recall in this case that $w_0$ does not have the saturation property \cite{kumar-icm2010, elashvili}. Nevertheless, the proposition above gives us many examples of $w \in W(B_n)$ which have the saturation property - let $I\subset S$ be any type $A$ subdiagram of $B_n$ and take $w$ to be the longest element of $W(\lie g_I) = W_I \subset W$. Proposition~\ref{prop:ambient-vs-res} and Knutson-Tao's saturation theorem together imply that $w$ has the saturation property.  In view of Theorem~\ref{thm:mainthm-312} below, we can take $w$ more generally to be a $312$- or $231$-avoiding permutation in $W_I$.
    Similar examples may be constructed for $\lie g$ of other types as well.
\end{example}

We will need below the following elementary lemma which compares the product of maxima with the maximum of products. We leave the easy proof to the reader.
\begin{lemma}\label{lem:multmax}
Let $a_i, b_i$ be positive real numbers for $i=1, 2, \ldots, r$. Then \[ \displaystyle\prod_{i=1}^r \max(a_i, b_i) \geq \max(\prod_{i=1}^r a_i, \; \prod_{i=1}^r b_i)\] 
\end{lemma}
We now have the following proposition: 
 \begin{proposition}
     \label{prop:w1w2cor}
Let $w \in W$ with $I=\supp(w)$. Let $I_j, \, j=1, \ldots, r$ denote the connected components of $I$ and let $w=\prod_{j=1}^r w_j$ with $\supp(w_j) = I_j$. 
\begin{enumerate}
    \item If each $w_j \in W(\lie g_{I_j})$ has the semigroup (resp. strong semigroup) property, then $w\in W$ has the semigroup (resp. strong semigroup) property. 
    \item If $d_j$ is a saturation factor for $w_j$, then their least common multiple $d:=lcm(\{d_j: j=1, \ldots, r\})$ is a saturation factor for $w$.
    In particular, $w$ has the saturation property if each $w_j$ does. 
    \end{enumerate}
\end{proposition}
\begin{proof}
The first assertion is a direct consequence of Proposition ~\ref{prop:w1w2mult} and Lemma~\ref{lem:multmax}. For (2), since $d$ is a multiple of $d_j$, it follows from Remark~\ref{rem:satsat} that $d$ is a saturation factor for $w_j$ for all $j=1, \ldots, r$. Appealing again to Proposition~\ref{prop:w1w2mult} completes the argument.
\end{proof}

\section{Demazure crystals in type $A$}\label{sec:demcrys}
The rest of the paper is concerned with the type $A$ case, i.e., $\lie g = \mathfrak{sl}_n$.
\subsection{The Tableaux model}

We now specialize the results of Section~\ref{sec:combexcellent} to the type $A$ case. We henceforth assume that $\lie g = \mathfrak{sl}_n\complex$. In this case, the set of simple roots is indexed by $S = \{1, 2, \ldots, n-1\}$. We also have
$W \cong S_n$ (the symmetric group) with simple transpositions as generators $s_i = (i \; i+1)$. We also have that $P \cong \integers_+^n$ and $P^+$ may be identified with the set $\pn$ of partitions with at most $n$ parts\footnote{Here we identify, as usual, tuples in $\integers_+^n$ which differ by a multiple of $(1,1,\ldots,1)$.}. We also let $\varepsilon_i - \varepsilon_j, i \neq j$ denote the set of all roots of $\lie g$. The set of positive roots is:
\[ \Phi^+ = \{ \varepsilon_i - \varepsilon_j, 1 \leq i < j \leq n\}\]

Given a partition $\mu \in \pn$, let $\Tab(\mu)$ denote the set of semistandard Young tableaux of shape $\mu$ with entries in $1, 2, \cdots, n$. To each $T \in \Tab(\mu)$ we associate its reverse row reading word $b_T$ obtained by reading the entries of $T$ from right to left and top to bottom (in English notation), for example, 

\begin{equation}\label{eq:tabex}
    T = \ytableaushort{123,23} \ \text{and} \ b_T = 32132 
\end{equation}
We will often identify a tableau with its reverse row reading word. The crystal raising and lowering operators $e_i, f_i \, (1 \leq i < n)$ act on the set $\Tab(\mu)$, or more precisely, on the associated words. More generally, $e_i, f_i$ have an action on the set of all words in the alphabet $\{1, 2, \cdots, n\}$ \cite[Chapter 5]{lothaire}. 

There exists a unique tableau $\supstd[\mu]$ in $\Tab(\mu)$ such that $e_i \supstd[\mu]=0 $ for all $i \in S$ and a unique tableau $\supstdual[\mu]$ such that $f_i \supstdual[\mu] = 0$ for all $i \in S$. The weight of a word $u$ in the alphabet $\{1, 2, \cdots, n\}$ is the tuple $(a_1, a_2, \cdots, a_n)$ where $a_i$ is the number of occurrences of $i$ in $u$. For a tableau $T \in \Tab(\mu)$ we define the weight of $T$ to be the weight of the corresponding word $b_T$.

Then $\supstd[\mu]$ is the highest weight element and $\supstdual[\mu]$ is the lowest weight element in $\Tab(\mu)$.

\begin{definition}\label{def:domword}
   A word $u = u_1u_2\cdots u_k$ in the alphabet $\{1,2,\cdots,n\}$  is said to be {\em dominant} (or a ballot sequence) if every left subword $u^{(t)} = u_1u_2\cdots u_t$ of $u$ for $1 \leq t \leq k$, contains more occurrences of $i$ than $i+1$ for all $1 \leq i <n$, or equivalently, if $e_i u =0$ for all $1 \leq i <n$.
\end{definition}

Given $w \in S_n$, fix a reduced decomposition $w = s_{i_1} s_{i_2} \cdots s_{i_k}$. For $\mu \in \pn$,
the Demazure crystal $\crys_w(\mu)$ (in the tableaux model) is given by:
\begin{equation}\label{eq:demcry}
  \crys_w(\mu) :=\{f_{i_1}^{m_1} f_{i_2}^{m_2} \cdots f_{i_k}^{m_k} \supstd[\mu]: m_j \geq 0\}
  \end{equation}
For $w=w_0$, we obtain $\crys_{w_0}(\mu) = \Tab(\mu)$. 
We likewise define the {\em opposite Demazure crystal} $\crys_w^{op}(\mu)$. Fix a reduced decomposition $ww_0 = s_{i_1} s_{i_2} \cdots s_{i_k}$ and define:
\begin{equation}\label{eq:opdemcry}
  \crys_w^{op}(\mu) :=\{e_{i_1}^{m_1} e_{i_2}^{m_2} \cdots e_{i_k}^{m_k} \supstdual[\mu]: m_j \geq 0\}
\end{equation}

Equation~\eqref{eq:lskkdecomp} now becomes the following statement in type $A$:
\begin{thm}\label{thm:joseph}  
$\lrw$ is the cardinality of the set \[ \demcrys_\lambda^\nu(\mu, w) :=\{T \in \crys_w(\mu): \supstd[\lambda]*b_T \text{ is a dominant word of weight } \nu\}.\] Here $*$ denotes concatenation of words.
\end{thm}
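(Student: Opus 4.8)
The plan is to pass to crystals and reduce the statement to a count of highest weight vectors in a tensor product of Demazure crystals. First, by Kashiwara's Demazure character formula the Demazure character is the character of the Demazure crystal,
\[ \dchar = \sum_{T \in \demcrys(\mu,w)} \bx^{\mathrm{wt}(T)}, \]
while $\bx^\lambda = \sum_{T \in \demcrys(\lambda,1)} \bx^{\mathrm{wt}(T)}$ trivially, since $\demcrys(\lambda,1) = \{\supstd[\lambda]\}$. By the defining equation \eqref{eq:wreflr}, $\lrw$ is the coefficient of $\schur[\nu]$ in $\dem_{\wnot}(\bx^\lambda\,\dchar)$, and $\bx^\lambda\,\dchar$ is the character of the tensor product of Demazure crystals $S := \demcrys(\lambda,1) \otimes \demcrys(\mu,w)$. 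Thus it suffices to prove
\[ \dem_{\wnot}\big(\ch S\big) = \sum_{\nu \in \pn} \#\demcrys_\lambda^\nu(\mu,w)\, \schur[\nu]. \]

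Second, I would translate the right-hand count into highest weight vectors. By the tensor product (signature) rule, an element $\supstd[\lambda] \otimes T$ of $S$ has reading word $\ssword[\lambda] * b_T$, and it is a highest weight vector (annihilated by every $e_i$) exactly when $\ssword[\lambda] * b_T$ is dominant in the sense of Definition~\ref{def:domword}; its weight is then $\lambda + \mathrm{wt}(T)$. Hence $\#\demcrys_\lambda^\nu(\mu,w)$ is precisely the number of highest weight vectors of weight $\nu$ in $S$. Moreover the $e_i$-action on such vectors is computed identically whether one works in $S$ or in the ambient crystal $B(\lambda) \otimes B(\mu)$, because $\demcrys(\mu,w)$ is stable under the raising operators, so the count is unambiguous.

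The crux is the structural fact, due in essence to Joseph, that a tensor product of Demazure crystals is isomorphic to a disjoint union of Demazure crystals. Granting this, write $S = \bigsqcup_j \demcrys(\eta_j, v_j)$, so that $\ch S = \sum_j \dem_{v_j}(\bx^{\eta_j})$. Applying $\dem_{\wnot}$ termwise and using $\dem_{\wnot}\dem_{v_j} = \dem_{\wnot}$ (a consequence of $\dem_{\wnot}\dem_i = \dem_{\wnot}$ for every $i$, since $\dem_{\wnot} = \dem_{\wnot s_i}\dem_i$ and $\dem_i^2 = \dem_i$) gives $\dem_{\wnot}(\ch S) = \sum_j \schur[\eta_j]$. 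Each constituent $\demcrys(\eta_j, v_j)$ contains the single highest weight vector $\supstd[\eta_j]$, so the number of summands with $\eta_j = \nu$ equals the number of highest weight vectors of weight $\nu$ in $S$, namely $\#\demcrys_\lambda^\nu(\mu,w)$; comparing coefficients of $\schur[\nu]$ finishes the argument. Equivalently one may run this on the representation side: the fact becomes the statement that the twisted Demazure module $\complex_\lambda \otimes V_w(\mu)$ (of character $\bx^\lambda\,\dchar$) carries an excellent filtration whose graded pieces are matched by $\dem_{\wnot}$ with the irreducible constituents of $K(\lambda,w,\mu)$, so that \eqref{eq:repthy-lrw} yields the count. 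This is Joseph's theorem \cite{Joseph-birthday}, with the Lakshmibai--Seshadri path incarnation in \cite{LLM} and the crystal-basis incarnation in \cite{fujita}.

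The main obstacle is exactly this structural fact, and it genuinely requires the Demazure-crystal structure rather than mere stability under the $e_i$. To see that $e_i$-stability alone is insufficient, take $n = 2$ and the top two elements of the single $f_1$-string of $B((2,0))$: this set is stable under $e_1$ and has one highest weight vector, yet $\dem_{\wnot} = \dem_1$ sends its character $x_1^2 + x_1 x_2$ to $\schur[(2,0)] + \schur[(1,1)]$, overcounting. The point is that $\dem_{\wnot}$ collapses a Demazure crystal to a single Schur polynomial by faithfully filling up its crystal strings, and this exactness holds for honest Demazure constituents but not for arbitrary $e_i$-stable segments. Verifying that $S = \demcrys(\lambda,1) \otimes \demcrys(\mu,w)$ really does split into Demazure crystals --- equivalently, that the concatenations $\ssword[\lambda] * b_T$ organize the shifted Demazure crystal into Demazure strings compatible with a reduced word --- is the technical heart, and is where I would lean on Joseph's results.
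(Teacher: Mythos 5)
Your proposal is correct, but it takes a genuinely different route from the paper's. The paper first converts $\lrw$ into the multiplicity of $V(\nu)$ in the Kostant--Kumar module $K(\lambda,w,\mu)$ via Kumar's character formula (equation \eqref{eq:repthy-lrw}), then quotes the decomposition rule \eqref{eq:lskkdecomp} of \cite{KRV} for $K(\lambda,w,\mu)$ in terms of Lakshmibai--Seshadri paths, and finally transports that rule to tableaux through the crystal isomorphism $P_\mu \cong \Tab(\mu)$. You, by contrast, never leave the character side: starting from the defining equation \eqref{eq:wreflr}, you interpret $\bx^\lambda\,\dchar$ as the character of $\{\supstd[\lambda]\}\otimes\demcrys(\mu,w)$, invoke Joseph's theorem that this set splits into a disjoint union of Demazure crystals, and then use the identity $\dem_{\wnot}\dem_{v} = \dem_{\wnot}$ together with the uniqueness of the highest weight element in each constituent to match the coefficient of $\schur[\nu]$ with the number of highest weight elements of weight $\nu$, i.e.\ with dominant concatenations $\ssword[\lambda]*b_T$. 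Both arguments rest on the same deep input --- Joseph's decomposition theorem, alias the combinatorial excellent filtration (\cite{Joseph-birthday}, \cite{LLM}), which also underlies the decomposition rule of \cite{KRV} --- but your packaging avoids Kostant--Kumar modules and Kumar's character formula altogether, making the theorem a direct consequence of the definition; what it gives up is the representation-theoretic reading of the statement, which the paper gets essentially for free since it develops $K(\lambda,w,\mu)$ anyway (e.g.\ for Proposition~\ref{prop:properties-clmn}). Your observation that mere $e_i$-stability of the set is insufficient, illustrated by the $n=2$ truncated-string counterexample, is a worthwhile sanity check that neither the paper nor the standard references spell out: it pinpoints exactly why the Demazure structure of the constituents, and not just closure under raising operators, is what makes $\dem_{\wnot}$ count highest weight elements correctly.
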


\noindent
We also let \[\Tab_\lambda^\nu(\mu):= \demcrys_\lambda^\nu(\mu,w_0) = \{T \in \Tab(\mu): \supstd[\lambda]*b_T \text{ is a dominant word of weight } \nu\}.\]

\begin{figure}
\begin{center}
\begin{tikzpicture}[x={(1cm*0.5,-\rootthree cm*0.5)},y={(1cm*0.5,\rootthree cm*0.5)}]
\foreach\i in{0,...,4}
\foreach\j in{\i,...,4}{
  \pgfmathtruncatemacro{\k}{5 - \j + \i};
  \pgfmathtruncatemacro{\l}{\i + 1};
  \draw(\i,\j)node(a\i\j){$a_{\k\l}$};
}
  \foreach\i/\ii in{0/-1,1/0,2/1,3/2,4/3}
 \foreach\j/\jj in{0/-1,1/0,2/1,3/2,4/3}{
  \ifnum\i<\j \draw[color=red](a\i\jj)--(a\i\j); \fi
  \ifnum\i>\j\else\ifnum\i>0 \draw[color=blue](a\ii\j)--(a\i\j);\fi\fi
 }
\end{tikzpicture}\qquad
\begin{tikzpicture}[x={(1cm*0.5,-\rootthree cm*0.5)},y={(1cm*0.5,\rootthree cm*0.5)}]
\foreach\i in{0,...,4}
\foreach\j in{\i,...,4}{
  \pgfmathtruncatemacro{\k}{5 - \j + \i};
  \pgfmathtruncatemacro{\l}{\i + 1};
  \draw(\i,\j)node(a\i\j){$\bullet$};
}
\foreach\i/\ii in{0/-1,1/0,2/1,3/2,4/3}
 \foreach\j/\jj in{0/-1,1/0,2/1,3/2,4/3}{
  \ifnum\i<\j \draw[color=red](a\i\jj)--(a\i\j); 
  \fi;
 }
  \node at (-.3,.3) {$s_4$}; \node at (-.3,1.3) {$s_3$}; \node at (-.3,2.3) {$s_2$}; \node at (-.3,3.3) {$s_1$};
 
 \node at (.7,1.3) {$s_3$}; \node at (.7,2.3) {$s_2$}; \node at (.7,3.3) {$s_1$};
 
 \node at (1.7,2.3) {$s_2$}; \node at (1.7,3.3) {$s_1$};
 
 \node at (2.7,3.3) {$s_1$};
\end{tikzpicture}
\caption{Gelfand--Tsetlin array for $n=5$. The red edges $a_{ij} \rightarrow a_{i-1,j}$ are labelled by $s_{i-j}$}
\label{fig:gt-array-aij-specified}
\end{center}
\end{figure}

\subsection{Gelfand--Tsetlin polytopes}\label{sec:gt}
A word on notation for the remainder of the paper: if $\mathcal{P}$ is a polyhedron, or a subset thereof, then $\mathcal{P}_\integers$ will denote the set of integer points in $\mathcal{P}$. 

A Gelfand--Tsetlin (GT) pattern of size $n$ is a triangular array $A=(a_{ij})_{n \geq i \geq j \geq 1} \in \reals^{n(n+1)/2}$ of real numbers (figure~\ref{fig:gt-array-aij-specified}) satisfying the following (``North-East'' and ``South-East'') inequalities for all $i > j$: 
\begin{align*}
    \NE_{ij}(A) &:= a_{ij} - a_{i-1, j} \geq 0,  \\ \SE_{ij}(A) &:= a_{i-1, j} - a_{i, j+1} \geq 0.
\end{align*}
    For $\mu \in \pn$, the GT polytope $\GT(\mu)$ is the set of all GT patterns with $a_{ni} = \mu_i$ for $1 \leq i \leq n$. For example, the following is a GT pattern of shape $\mu = (6,4,2,2,1)$.

\begin{center}
\begin{tikzpicture}[x={(1cm*0.5,-\rootthree cm*0.5)},y={(1cm*0.5,\rootthree cm*0.5)}]
  \draw(0,0)node(a00){$6$};\draw(0,1)node(a01){$5$};\draw(0,2)node(a02){$5$};\draw(0,3)node(a03){$3$};\draw(0,4)node(a04){$2$};
  \draw(1,1)node(a11){$4$};\draw(1,2)node(a12){$4$};\draw(1,3)node(a13){$3$};\draw(1,4)node(a14){$2$};
  \draw(2,2)node(a22){$2$};\draw(2,3)node(a23){$2$};\draw(2,4)node(a24){$1$};
  \draw(3,3)node(a33){$2$};\draw(3,4)node(a34){$1$};
  \draw(4,4)node(a44){$1$};

\end{tikzpicture}
\end{center}

We have the standard bijection $A \mapsto \bg(A)$ from $\GTz(\mu)$  to $\Tab(\mu)$, with the tableau $\bg(A)$ uniquely specified by the condition that for all $i \geq j$, the number of $\boxed{i}$ in row $j$ equals $\NE_{ij}(A)$ (with $a_{i-1, i} :=0$).  For example, letting $A$ be the above GT pattern of shape $\mu =(6,4,2,2,1)$, we have
\begin{equation}\label{eq:GTtab}
    \bg(A) = \ytableaushort{112335,2234,34,45,6}
\end{equation}

\subsection{Kogan faces}\label{sec:kogan}
Fix a subset $F \subseteq \{(i,j): n \geq i > j \geq 1\}$. The {\em Kogan face}\footnote{The corresponding version with SE inequalities would define a {\em dual Kogan face}.} of $\GT(\mu)$ associated to $F$ is :
\[ \kogan(\mu,F) = \{ A \in \GT(\mu): \NE_{ij}(A)=0 \text{ for } (i,j) \in F\} \]
Next, to each pair $n \geq i > j \geq 1$, associate the simple transposition $s_{i-j} \in S_n$ (figure~\ref{fig:gt-array-aij-specified}). We list the elements of $F$ in lexicographically increasing order: $(i,j)$ precedes $(i',j') \iff$ either $i<i'$, or $i=i'$ and $j<j'$. List the corresponding $s_{i-j}$ (written left to right) in this order; let $\sigma(F)$ denote their product in $S_n$. If $\len \sigma(F) = |F|$, we say that $F$ is {\em reduced} and set \cite[Definition 5.1]{fujita}:
\[ \varpi(F) = \wnot \,\sigma(F) \,\wnot \]
\begin{example}
Let $n=5$ and let $F_1 = \{(3,1),(5,2)\}$, $F_2 = \{(3,1),(4,2)\}$. Then $\sigma(F_1) = s_2s_3$ and $\sigma(F_2) = s_2^2 = 1$. Thus $F_1$ is reduced and $F_2$ is not.  
\end{example}
We now associate to each element of $S_n$ a union of Kogan faces as follows:
\begin{definition}\label{def:gtkogan}
  For $w \in S_n$ and $\mu \in \pn$, let
  \begin{equation}\label{eq:koganwdef}
    \kogan(\mu, w) := \bigcup \kogan(\mu, F), \text{ the union over reduced } F \text{ for which } \varpi(F) = w.
    \end{equation}
\end{definition}
\begin{example}
Let $n=5$ and $w=w_0(s_1s_2s_3)w_0 = s_4s_3s_2$. Then $F_1 = \{ (2,1), (3,1), (4,1)\}$, $F_2 = \{ (2,1), (3,1), (5,2)\}$, $F_3 = \{ (2,1), (4,2), (5,2)\}$, $F_4 = \{ (3,2), (4,2), (5,2)\}$ are all the reduced faces $F$ for which $\varpi(F) = w$. Thus $\kogan(\mu, w) = \bigcup_{i=1}^4 \kogan(\mu, F_i)$.
\end{example}
The following important proposition relates the Demazure crystals in the Gelfand--Tsetlin and tableaux models \cite[Corollary 5.19]{fujita}:
\begin{proposition}\label{prop:fujita}
(Fujita) The bijection $\bg: \GTz(\mu) \to \Tab(\mu)$ restricts to a bijection $\koganz(\mu, \wnot w) \stackrel{\sim}{\to} \crys_w(\mu)$. 
\end{proposition}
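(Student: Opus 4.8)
Since $\bg$ is already a bijection $\GTz(\mu) \to \Tab(\mu)$, the content of the statement is a set equality: it suffices to show that $\bg$ carries the integer points of the union of Kogan faces defining $\koganz(\mu, \wnot w)$ exactly onto the Demazure crystal $\demcrys(\mu, w)$. The plan is to prove this by induction on $\len(w)$, matching the recursive structure of Demazure crystals with a corresponding recursion on Kogan faces. For the base case take $w = \wnot$: then $\wnot w = 1$, the only reduced $F$ with $\varpi(F) = 1$ is $F = \emptyset$, and $\kogan(\mu, \emptyset)$ is the whole polytope $\GT(\mu)$, whose integer points map under $\bg$ onto all of $\Tab(\mu) = \demcrys(\mu, \wnot)$. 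The opposite extreme $w = 1$ (where $F$ forces the unique highest-weight pattern mapping to $\supstd[\mu]$) is a useful consistency check.

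For the inductive step, write $w = s_i w'$ with $\len(w) = \len(w') + 1$, so the Demazure-crystal recursion gives $\demcrys(\mu, w) = \mathcal{F}_i\big(\demcrys(\mu, w')\big)$, where $\mathcal{F}_i(S) = \{ f_i^m b : b \in S,\ m \geq 0 \} \setminus \{0\}$ is the crystal Demazure operator. On the polytope side, the identity $\varpi(F) = \wnot\,\sigma(F)\,\wnot$ shows that $\varpi(F) = \wnot w$ is equivalent to $\sigma(F) = w\wnot$; since $w\wnot = s_i\,(w'\wnot)$ with a length drop, the reduced words realizing $\sigma(F) = w\wnot$ are obtained from those realizing $w'\wnot$ by deleting an initial letter $s_i$. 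Correspondingly, the faces contributing to $\koganz(\mu, \wnot w)$ should be obtained from those contributing to $\koganz(\mu, \wnot w')$ by relaxing the single equality $NE_{i'j'} = 0$ attached to that letter (with $i' - j' = i$). The heart of the argument is therefore the lemma that, under $\bg$, relaxing this one constraint corresponds precisely to applying $\mathcal{F}_i$ on the crystal side. To make this precise I would first record the explicit combinatorial description of $e_i, f_i$ on GT patterns transported through $\bg$ (each $f_i$ increments a single entry determined by a bracketing rule on the relevant rows), and then verify that an $f_i$-string inside $\GT(\mu)$ is exactly the set of integer points obtained by freeing $NE_{i'j'} = 0$ while retaining the remaining defining equalities of the Kogan face.

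The main obstacle is this last lemma, and within it the bookkeeping that aligns three distinct labelings: the lexicographic order on $F$, the edge-labeling $(i,j) \mapsto s_{i-j}$, and the conjugation $s_j \mapsto \wnot s_j \wnot = s_{n-j}$ built into $\varpi$. One must check that the lex-smallest pair of the larger face indeed carries the reflection $s_i$, that the reducedness condition $\len\sigma(F) = |F|$ matches the reducedness of the subword of a fixed reduced expression of $\wnot$ indexing the Demazure operators, and that the union over all reduced $F$ with a common value of $\varpi(F)$ reconstitutes the entire Demazure crystal rather than a single face. Conceptually this is the assertion that $\GT(\mu)$ is the string polytope for the standard reduced word of $\wnot$ and that Kogan faces are its Demazure faces in the sense of Littelmann and Kashiwara; so an alternative and cleaner route is to invoke that string-polytope dictionary directly and then perform only the final translation of conventions. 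Integrality is not a separate difficulty, since the GT and string parametrizations are unimodularly equivalent, so equality of the integer point sets follows from equality of the underlying rational faces.
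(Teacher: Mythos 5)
First, a point of comparison: the paper does not prove this proposition at all --- it is imported verbatim as \cite[Corollary 5.19]{fujita}. So your attempt is really an attempt to reprove Fujita's theorem from scratch, and it founders at exactly the step you yourself flag as the heart of the argument. The key lemma --- that the $f_i$-strings through a Kogan face sweep out precisely the face obtained by freeing the single equality $NE_{i'j'}=0$ attached to the deleted letter --- is false. Take $n=3$, $\mu=(2,1,0)$, $w'=s_1$, $w=s_2w'=s_2s_1$. The unique reduced face for $\wnot w'$ is $F'=\{(3,1),(3,2)\}$, cut out by $a_{21}=\mu_1$ and $a_{22}=\mu_2$; deleting its lex-initial pair $(3,1)$, which indeed carries the letter $s_2$, gives $F=\{(3,2)\}$. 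Writing points of $\GT(\mu)$ as $(a_{11},a_{21},a_{22})$, the $f_2$-string through the point $(1,2,1)$ of $\koganz(\mu,F')$ is $(1,2,1)\to(1,1,1)\to(1,1,0)$: the operator first decrements $a_{21}$, then --- once $a_{21}=a_{11}$ --- the bracketing rule switches columns and it decrements $a_{22}$, so the string exits the face $\{NE_{32}=0\}$. Consequently $\mathcal{F}_2\bigl(\bg(\koganz(\mu,F'))\bigr)$ has five elements, while $\bg(\koganz(\mu,F))$ has only three; the two missing points lie in the \emph{other} Kogan face $\{(2,1)\}$, which also satisfies $\varpi=\wnot w$ but which your recursion can never produce, since there is no pair lex-smaller than $(2,1)$ whose deletion could yield it. Thus both halves of your inductive step break: the strings do not stay in the relaxed face, and deleting lex-initial pairs does not generate all the faces in the union.

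This failure is not bookkeeping to be cleaned up; it is the actual mathematical content of the proposition. Crystal operators on GT patterns move along broken lines whose direction changes with the pattern, individual Kogan faces are not stable under the partial crystal operators, and controlling how the $f_i$-strings redistribute themselves over \emph{all} reduced faces with a common $\varpi$ is precisely what Fujita's proof accomplishes. This is also why the paper is careful to contrast the proposition with the older results of \cite{smirnov-et-al} and \cite{PS}, which give only equality of characters and are ``inadequate for our present purposes'': a face-by-face argument of the kind you propose could at best recover such a character identity. Your fallback route is circular: that $\GT(\mu)$ is unimodularly a string polytope is classical, but the assertion that Kogan faces are its ``Demazure faces'' is not an off-the-shelf dictionary you may invoke --- it is a restatement of the proposition itself. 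Finally, a smaller slip: your induction runs upward in $\len(w)$ (from $w'$ to $s_iw'$), so its base case must be $w=1$, where the fully constrained face maps to $\{\supstd[\mu]\}$, not $w=\wnot$.
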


It was previously shown in \cite{smirnov-et-al} (for regular $\mu$) and \cite{PS} (for $w$ $312$-avoiding) that $\koganz(\mu, \wnot w)$ and $\crys_w(\mu)$  have the same character. This weaker statement is however inadequate for our present purposes.

\begin{remark}\label{rem:sigma}
  We can also put a different total order on the set $\{(i,j): n \geq i > j \geq 1\}$. Let $(i,j)$ precede $(i',j')$ if and only if $j<j'$, or $j=j'$ and $i<i'$. List the $(i,j) \in F$ in increasing order relative to this new total order and denote the product of the corresponding $s_{i-j}$  by $\sigma'(F)$. It can be easily checked that $\sigma(F) = \sigma'(F)$ in $S_n$. 
\end{remark}

\section{Hive-Kogan faces: a polytopal model for $\lrw$}\label{sec:hive-clmn}
\subsection{Hives}
We begin with a quick overview \cite{buch, Knutson-Tao-v2}. The {\em big hive triangle} $\bighive$ is the array of Figure~\ref{fig:bighive-rhombi}, with $(n+1)$ vertices on each boundary edge, and $(n-2)(n-1)/2$ interior vertices. We note that there are 3 types of rhombi in $\bighive$ (figure \ref{fig:bighive-rhombi}): the Northeast (NE) slanted (in red), the Southeast (SE) slanted (in green) and the vertical diamonds (in blue).

\begin{figure}
\begin{center}
\raisebox{9mm}{\begin{tikzpicture}[scale=0.9]
      \node (a00) at (0,0*1.732) {};
      \node (a01) at (0.5,0.5*1.732) {};
      \node (a02) at (1,1*1.732) {};
      \node (a03) at (1.5,1.5*1.732) {};
      \node (a04) at (2,2*1.732) {};
      \node (a05) at (2.5,2.5*1.732) {};
      \node (a11) at (1,0*1.732) {};
      \node (a12) at (1.5,0.5*1.732) {};
      \node (a13) at (2,1*1.732) {};
      \node (a14) at (2.5,1.5*1.732) {};
      \node (a15) at (3,2*1.732) {};
      \node (a22) at (2,0*1.732) {};
      \node (a23) at (2.5,0.5*1.732) {};
      \node (a24) at (3,1*1.732) {};
      \node (a25) at (3.5,1.5*1.732) {};
      \node (a33) at (3,0*1.732) {};
      \node (a34) at (3.5,0.5*1.732) {};
      \node (a35) at (4,1*1.732) {};
      \node (a44) at (4,0*1.732) {};
      \node (a45) at (4.5,0.5*1.732) {};
      \node (a55) at (5,0*1.732) {};

      \draw[-] [draw=red, ultra thick] (a00.center) -- (a11.center);
      \draw[-] [draw=black,thick] (a11.center) -- (a22.center);
      \draw[-] [draw=black,thick] (a22.center) -- (a33.center);
      \draw[-] [draw=black,thick] (a33.center) -- (a44.center);
      \draw[-] [draw=green,ultra thick] (a44.center) -- (a55.center);
      
      \draw[-] [draw=red,ultra thick] (a01.center) -- (a12.center);
      \draw[-] [draw=black,thick] (a12.center) -- (a23.center);
      \draw[-] [draw=black,thick] (a23.center) -- (a34.center);
      \draw[-] [draw=green, ultra thick] (a34.center) -- (a45.center);

      \draw[-] [draw=black,thick] (a02.center) -- (a13.center);
      \draw[-] [draw=blue,ultra thick] (a13.center) -- (a24.center);
      \draw[-] [draw=black,thick] (a24.center) -- (a35.center);
      
      \draw[-] [draw=black,thick] (a03.center) -- (a14.center);
      \draw[-] [draw=black,thick] (a14.center) -- (a25.center);
      
      \draw[-] [draw=black,thick] (a04.center) -- (a15.center);
      \draw[-] [draw=red, ultra thick] (a00.center) -- (a01.center);
      \draw[-] [draw=black,thick] (a01.center) -- (a02.center);
      \draw[-] [draw=black,thick] (a02.center) -- (a03.center);
      \draw[-] [draw=black,thick] (a03.center) -- (a04.center);
      \draw[-] [draw=black,thick] (a04.center) -- (a05.center);
      
      \draw[-] [draw=red,ultra thick] (a11.center) -- (a12.center);
      \draw[-] [draw=black,thick] (a12.center) -- (a13.center);
      \draw[-] [draw=blue,ultra thick] (a13.center) -- (a14.center);
      \draw[-] [draw=black,thick] (a14.center) -- (a15.center);

      \draw[-] [draw=black,thick] (a22.center) -- (a23.center);
      \draw[-] [draw=blue,ultra thick] (a23.center) -- (a24.center);
      \draw[-] [draw=black,thick] (a24.center) -- (a25.center);
      
      \draw[-] [draw=black,thick] (a33.center) -- (a34.center);
      \draw[-] [draw=black,thick] (a34.center) -- (a35.center);
      
      \draw[-] [draw=green,ultra thick] (a44.center) -- (a45.center);
      \draw[-] [draw=red,ultra thick] (a01.center) -- (a11.center);
      
      \draw[-] [draw=black,thick] (a02.center) -- (a12.center);
      \draw[-] [draw=black,thick] (a12.center) -- (a22.center);
      
      \draw[-] [draw=black,thick] (a03.center) -- (a13.center);
      \draw[-] [draw=blue, ultra thick] (a13.center) -- (a23.center);
      \draw[-] [draw=black,thick] (a23.center) -- (a33.center);

      \draw[-] [draw=black,thick] (a04.center) -- (a14.center);
      \draw[-] [draw=blue,ultra thick] (a14.center) -- (a24.center);
      \draw[-] [draw=black,thick] (a24.center) -- (a34.center);
      \draw[-] [draw=green, ultra thick] (a34.center) -- (a44.center);
      
      \draw[-] [draw=black,thick] (a05.center) -- (a15.center);
      \draw[-] [draw=black,thick] (a15.center) -- (a25.center);
      \draw[-] [draw=black,thick] (a25.center) -- (a35.center);
      \draw[-] [draw=black,thick] (a35.center) -- (a45.center);
      \draw[-] [draw=green,ultra thick] (a45.center) -- (a55.center);
        \fill[red,opacity=0.2] (a00.center)--(a01.center)--(a12.center)--(a11.center)--cycle;
        \fill[green,opacity=0.2] (a34.center)--(a44.center)--(a55.center)--(a45.center)--cycle;
        \fill[blue,opacity=0.2] (a14.center)--(a24.center)--(a23.center)--(a13.center)--cycle;
    \end{tikzpicture}}\hspace{1cm}
    \begin{tikzpicture}[scale=1]
      \node (a00) at (0,0*1.732) {$\bullet$};
      \node (a01) at (0.5,0.5*1.732) {$\bullet$};
      \node (a02) at (1,1*1.732) {$\bullet$};
      \node (a03) at (1.5,1.5*1.732) {$\bullet$};
      \node (a04) at (2,2*1.732) {$\bullet$};
      \node (a05) at (2.5,2.5*1.732) {$\bullet$};
      \node (a11) at (1,0*1.732) {$\bullet$};
      \node (a12) at (1.5,0.5*1.732) {};
      \node (a13) at (2,1*1.732) {};
      \node (a14) at (2.5,1.5*1.732) {};
      \node (a15) at (3,2*1.732) {$\bullet$};
      \node (a22) at (2,0*1.732) {$\bullet$};
      \node (a23) at (2.5,0.5*1.732) {};
      \node (a24) at (3,1*1.732) {};
      \node (a25) at (3.5,1.5*1.732) {$\bullet$};
      \node (a33) at (3,0*1.732) {$\bullet$};
      \node (a34) at (3.5,0.5*1.732) {};
      \node (a35) at (4,1*1.732) {$\bullet$};
      \node (a44) at (4,0*1.732) {$\bullet$};
      \node (a45) at (4.5,0.5*1.732) {$\bullet$};
      \node (a55) at (5,0*1.732) {$\bullet$};
       \node (la5) at (-0.5,0*1.732) {$|\lambda|$};
       \node (la4) at (-0.6,0.5*1.732) {$\sum_{i=1}^4 \lambda_i$};
       \node (la3) at (-0.1,1*1.732) {$\sum_{i=1}^3 \lambda_i$};  
       \node (la2) at (0.4,1.5*1.732) {$\sum_{i=1}^2 \lambda_i$};    
       \node (la1) at (1.5,2*1.732) {$\lambda_1$};
       \node (zz) at (2.5,2.8*1.732) {$0$};      

       \node (nu1) at (3.5,2*1.732) {$\nu_1$};
       \node (nu2) at (4.4,1.5*1.732) {$\sum_{i=1}^2 \nu_i$};
       \node (nu3) at (5,1*1.732) {$\sum_{i=1}^3 \nu_i$};
       \node (nu4) at (5.6,0.5*1.732) {$\sum_{i=1}^4 \nu_i$};
       \node (nu5) at (6.7,0*1.732) {$|\nu| = |\lambda| + |\mu|$};
       \node (mu1) at (0.7,-0.4*1.732) {$\substack{\mu_1\\+|\lambda|}$};
       \node (mu2) at (1.8,-0.4*1.732) {$\substack{\sum_{i=1}^2 \mu_i\\+|\lambda|}$};
       \node (mu3) at (3.2,-0.4*1.732) {$\substack{\sum_{i=1}^3 \mu_i\\+|\lambda|}$};
       \node (mu4) at (4.5,-0.4*1.732) {$\substack{\sum_{i=1}^4 \mu_i\\+|\lambda|}$};

      \draw[-] [draw=black, thick] (a00.center) -- (a11.center);
      \draw[-] [draw=black,thick] (a11.center) -- (a22.center);
      \draw[-] [draw=black,thick] (a22.center) -- (a33.center);
      \draw[-] [draw=black,thick] (a33.center) -- (a44.center);
      \draw[-] [draw=black,thick] (a44.center) -- (a55.center);
      
      \draw[-] [draw=black,thick] (a01.center) -- (a12.center);
      \draw[-] [draw=black,thick] (a12.center) -- (a23.center);
      \draw[-] [draw=black,thick] (a23.center) -- (a34.center);
      \draw[-] [draw=black, thick] (a34.center) -- (a45.center);

      \draw[-] [draw=black,thick] (a02.center) -- (a13.center);
      \draw[-] [draw=black,thick] (a13.center) -- (a24.center);
      \draw[-] [draw=black,thick] (a24.center) -- (a35.center);
      
      \draw[-] [draw=black,thick] (a03.center) -- (a14.center);
      \draw[-] [draw=black,thick] (a14.center) -- (a25.center);
      
      \draw[-] [draw=black,thick] (a04.center) -- (a15.center);
      \draw[-] [draw=black, thick] (a00.center) -- (a01.center);
      \draw[-] [draw=black,thick] (a01.center) -- (a02.center);
      \draw[-] [draw=black,thick] (a02.center) -- (a03.center);
      \draw[-] [draw=black,thick] (a03.center) -- (a04.center);
      \draw[-] [draw=black,thick] (a04.center) -- (a05.center);
      
      \draw[-] [draw=black,thick] (a11.center) -- (a12.center);
      \draw[-] [draw=black,thick] (a12.center) -- (a13.center);
      \draw[-] [draw=black,thick] (a13.center) -- (a14.center);
      \draw[-] [draw=black,thick] (a14.center) -- (a15.center);

      \draw[-] [draw=black,thick] (a22.center) -- (a23.center);
      \draw[-] [draw=black,thick] (a23.center) -- (a24.center);
      \draw[-] [draw=black,thick] (a24.center) -- (a25.center);
      
      \draw[-] [draw=black,thick] (a33.center) -- (a34.center);
      \draw[-] [draw=black,thick] (a34.center) -- (a35.center);
      
      \draw[-] [draw=black,thick] (a44.center) -- (a45.center);
      \draw[-] [draw=black,thick] (a01.center) -- (a11.center);
      
      \draw[-] [draw=black,thick] (a02.center) -- (a12.center);
      \draw[-] [draw=black,thick] (a12.center) -- (a22.center);
      
      \draw[-] [draw=black,thick] (a03.center) -- (a13.center);
      \draw[-] [draw=black, thick] (a13.center) -- (a23.center);
      \draw[-] [draw=black,thick] (a23.center) -- (a33.center);

      \draw[-] [draw=black,thick] (a04.center) -- (a14.center);
      \draw[-] [draw=black,thick] (a14.center) -- (a24.center);
      \draw[-] [draw=black,thick] (a24.center) -- (a34.center);
      \draw[-] [draw=black, thick] (a34.center) -- (a44.center);
      
      \draw[-] [draw=black,thick] (a05.center) -- (a15.center);
      \draw[-] [draw=black,thick] (a15.center) -- (a25.center);
      \draw[-] [draw=black,thick] (a25.center) -- (a35.center);
      \draw[-] [draw=black,thick] (a35.center) -- (a45.center);
      \draw[-] [draw=black,thick] (a45.center) -- (a55.center);
    \end{tikzpicture}
  \end{center}
  \caption{(a) The big hive triangle $\bighive$ for $n=5$, with the three kinds of rhombi marked. (b) The border labels of hives in $\Hive(\lmn)$ as functions of $\lmn$.}
  \label{fig:bighive-rhombi}
  \end{figure}

\begin{definition}\label{def:contentR}
Let $h$ be a $\reals$-labelling of the vertices of $\bighive$. Given a rhombus $R$ in $\bighive$, we define the {\em content} $R(h)$ of $R$ in $h$ to be the sum of the labels of $h$ on the obtuse-angled vertices of $R$ minus the sum of its labels on the acute-angled vertices of $R$. 
\end{definition}

    For example, if the vertex labels of the following rhombus $R$ are $a,b,c,d$ as shown, then it has content $(b+d) - (a+c)$:
    \begin{center}
        \begin{tikzpicture}
          \draw[thick](0,0)-- (60:1.5cm);
          \draw[thick](0.75,1.3) --(2.25,1.3);
          \draw[thick](0,0)-- (1.5,0);
          \draw[thick](1.5,0)-- (2.25,1.3);
          \node at (1.1,0.70) {$R$};
          \node at (-0.3,0) {$a$};
          \node at (0.73,1.5) {$b$};
          \node at (2.25,1.5) {$c$};
          \node at (1.7,0) {$d$};
        \end{tikzpicture}
    \end{center}

\begin{definition}\label{def:hivecone}
    The {\em hive cone} $\Hive$ is the set comprising all $\reals$-labellings $h$ of the vertices of $\bighive$ such that $R(h) \geq 0$ for each rhombus $R$ in $\bighive$.
    \end{definition}
It is clear that $\Hive$ is a polyhedral cone in $\reals^{(n+1)(n+2)/2}$, carved out by the rhombus inequalities of Definition~\ref{def:hivecone}. An element $h \in \Hive$ will be called a {\em hive}.  If $R(h)=0$, we say that the rhombus $R$ is {\em flat} in $h$ (see Figure~\ref{fig:example-map}). 

\begin{definition}
  Given $\lambda = (\lambda_1, \lambda_2, \cdots, \lambda_n) \in \reals^n$, let $|\lambda| = \sum_i \lambda_i$. Define the $(n+1)$-tuple of partial sums
  \[\ps[\lambda] = (0, \lambda_1, \sum_{i=1}^2 \lambda_i, \cdots, \sum_{i=1}^n \lambda_i)\]
  and the $(n-1)$-tuple of successive differences
  \[\sd \lambda = (\lambda_2 - \lambda_1, \lambda_3 - \lambda_2, \cdots, \lambda_n - \lambda_{n-1})\]
  so that $\sd \ps[\lambda] = \lambda$.
\end{definition}

\begin{definition}\label{def:hivepolytope}
    For $(\lambda, \mu, \nu) \in (\reals^n)^3$ with $|\lambda| + |\mu| = |\nu|$, the {\em hive polytope} $\Hive(\lmn)$ is the set of all hives whose boundary labels are $\ps[\lambda]$ (left edge, top to bottom), $|\lambda|+\ps[\mu]$ (bottom edge, left to right) and $\ps[\nu]$ (right edge, top to bottom) (figure~\ref{fig:bighive-rhombi}b).
\end{definition}    

For later use, we will find it convenient to label the NE rhombi of the big hive triangle $\bighive$. The NE rhombi in the last row are labelled $R_{nj}$ with $1 \leq j <n$ from left to right; the NE rhombi in the preceding row are labelled $R_{n-1,j}$ with $1 \leq j < n-1$ from left to right, and so on, moving one row upwards each time. The sole NE rhombus in the top row is labelled $R_{21}$. Figure~\ref{fig:NE-labels} depicts this labelling for $n=5$.

\begin{figure}[h]
\begin{center}
  \begin{tikzpicture}[scale=0.9]

  \node at (0.75, 0.25*1.732) {$R_{51}$};
  \node at (1.75, 0.25*1.732) {$R_{52}$};
  \node at (2.75, 0.25*1.732) {$R_{53}$};
  \node at (3.75, 0.25*1.732) {$R_{54}$};
  \node at (1.25, 0.755*1.732) {$R_{41}$};
  \node at (2.25, 0.75*1.732) {$R_{42}$};
  \node at (3.25, 0.75*1.732) {$R_{43}$};
  \node at (1.75, 1.25*1.732) {$R_{31}$};
  \node at (2.75, 1.25*1.732) {$R_{32}$};
  \node at (2.25, 1.75*1.732) {$R_{21}$};

      \node (a00) at (0,0*1.732) {};
      \node (a01) at (0.5,0.5*1.732) {};
      \node (a02) at (1,1*1.732) {};
      \node (a03) at (1.5,1.5*1.732) {};
      \node (a04) at (2,2*1.732) {};
      \node (a05) at (2.5,2.5*1.732) {};
      \node (a11) at (1,0*1.732) {};
      \node (a12) at (1.5,0.5*1.732) {};
      \node (a13) at (2,1*1.732) {};
      \node (a14) at (2.5,1.5*1.732) {};
      \node (a15) at (3,2*1.732) {};
      \node (a22) at (2,0*1.732) {};
      \node (a23) at (2.5,0.5*1.732) {};
      \node (a24) at (3,1*1.732) {};
      \node (a25) at (3.5,1.5*1.732) {};
      \node (a33) at (3,0*1.732) {};
      \node (a34) at (3.5,0.5*1.732) {};
      \node (a35) at (4,1*1.732) {};
      \node (a44) at (4,0*1.732) {};
      \node (a45) at (4.5,0.5*1.732) {};
      \node (a55) at (5,0*1.732) {};

      \draw[-] [draw=black,thick] (a00.center) -- (a11.center);
      \draw[-] [draw=black,thick] (a11.center) -- (a22.center);
      \draw[-] [draw=black,thick] (a22.center) -- (a33.center);
      \draw[-] [draw=black,thick] (a33.center) -- (a44.center);
      \draw[-] [draw=black,thick] (a44.center) -- (a55.center);
      
      \draw[-] [draw=black,thick] (a01.center) -- (a12.center);
      \draw[-] [draw=black,thick] (a12.center) -- (a23.center);
      \draw[-] [draw=black,thick] (a23.center) -- (a34.center);
      \draw[-] [draw=black, thick] (a34.center) -- (a45.center);

      \draw[-] [draw=black,thick] (a02.center) -- (a13.center);
      \draw[-] [draw=black,thick] (a13.center) -- (a24.center);
      \draw[-] [draw=black,thick] (a24.center) -- (a35.center);
      
      \draw[-] [draw=black,thick] (a03.center) -- (a14.center);
      \draw[-] [draw=black,thick] (a14.center) -- (a25.center);
      
      \draw[-] [draw=black,thick] (a04.center) -- (a15.center);
      \draw[-] [draw=black, thick] (a00.center) -- (a01.center);
      \draw[-] [draw=black,thick] (a01.center) -- (a02.center);
      \draw[-] [draw=black,thick] (a02.center) -- (a03.center);
      \draw[-] [draw=black,thick] (a03.center) -- (a04.center);
      \draw[-] [draw=black,thick] (a04.center) -- (a05.center);
      
      \draw[-] [draw=black,thick] (a11.center) -- (a12.center);
      \draw[-] [draw=black,thick] (a12.center) -- (a13.center);
      \draw[-] [draw=black,thick] (a13.center) -- (a14.center);
      \draw[-] [draw=black,thick] (a14.center) -- (a15.center);

      \draw[-] [draw=black,thick] (a22.center) -- (a23.center);
      \draw[-] [draw=black,thick] (a23.center) -- (a24.center);
      \draw[-] [draw=black,thick] (a24.center) -- (a25.center);
      
      \draw[-] [draw=black,thick] (a33.center) -- (a34.center);
      \draw[-] [draw=black,thick] (a34.center) -- (a35.center);
      
      \draw[-] [draw=black,thick] (a44.center) -- (a45.center);

       \draw[-] [draw=black,thick] (a05.center) -- (a15.center);
       \draw[-] [draw=black,thick] (a15.center) -- (a25.center);
       \draw[-] [draw=black,thick] (a25.center) -- (a35.center);
       \draw[-] [draw=black,thick] (a35.center) -- (a45.center);
       \draw[-] [draw=black,thick] (a45.center) -- (a55.center);
  \end{tikzpicture}
\end{center}
\caption{Labelling of North-East slanted rhombi, shown for $n=5$.}
\label{fig:NE-labels}
\end{figure}

\subsection{Hives and Gelfand--Tsetlin patterns}
We now fix $\lambda,\mu, \nu \in \pn$ with  $|\lambda| + |\mu| = |\nu|$. Since each $h \in \Hive(\lmn)$ is an $\reals$-labelled triangular array (of size $n+1$), its horizontal sections (marked in blue in figure~\ref{fig:example-map}) form a sequence of vectors $h_0, h_1, \cdots, h_n$ (listed from top to bottom), with $h_i \in \reals^{i+1}$. For example, for the hive on the left in figure~\ref{fig:example-map}, we have 
\begin{align*}
    h_0=(&0) \\ h_1 = (20&,\,28)\\ h_2 = (29,\, &45,\, 48)\\ h_3=(33,\,49&,\,57,\,57)\\ h_4= (35,\,51,\,&60,\,61,\,61)\\ h_5=(35,\, 51,\, 6&1,\, 63,\, 64,\, 64).
    \end{align*}
    We will often represent a hive $h$ by the tuple $(h_0, h_1, \ldots, h_n)$. 

Given $h = (h_0, h_1, \ldots, h_n) \in \Hive(\lmn)$, consider the tuple $(\sd h_1, \sd h_2, \cdots, \sd h_n)$.
Since $\sd h_i \in \reals^i$ for each $1 \leq i \leq n$, we may arrange this tuple into a triangular array of size $n$, with the entries of $\sd h_i$ arranged along the $i^{th}$ row from the top (see Figure \ref{fig:example-map}).

\begin{definition}
    The {\em horizontal successive differences} map $\bsd$ is given by:
\begin{equation}\label{eq:hsd}
  h=(h_0, h_1, \ldots, h_n) \mapsto \bsd h =  (\sd h_1, \sd h_2, \cdots, \sd h_n)
  \end{equation}
We view $\bsd$ as a linear map on the ambient spaces $\mathbb{R}^{\ell} \stackrel{\bsd}{\mapsto} \mathbb{R}^{\ell'}$, where $\ell = \frac{(n+1)(n+2)}{2}$ and $\ell' = \frac{n(n+1)}{2}$. 
\end{definition}
\begin{proposition}\label{prop:hivegt}
  For each $h \in \Hive(\lmn)$, we have $\bsd h \in \GT(\mu)$. 
\end{proposition}
\begin{proof}
  It is easy to see that the NE and SE rhombus inequalities satisfied by $h$ imply the corresponding NE and SE Gelfand--Tsetlin inequalities for $\bsd h$ (cf. \cite[Appendix]{buch}, \cite{King-terada-etal}). In other words, for all $n \geq i > j \geq 1$, we have
  \begin{equation}\label{eq:rnequal}
    R_{ij}(h) = \NE_{ij}(\bsd h)
    \end{equation}
  and an analogous statement for the SE rhombi.
\end{proof}
\begin{remark}
The chosen $\lambda$ and the vertical rhombus inequalities satisfied by $h$ do not play a role in the above proposition. They will figure in Proposition~\ref{prop:bsd-props}(3). 
\end{remark}
\begin{figure}[h]
\begin{center}
\begin{tikzpicture}[x={(1cm*0.5,-\rootthree cm*0.5)},y={(1cm*0.5,\rootthree cm*0.5)}]

  \node(a00) at (0,0){};
  \node(a03) at (0,3){};
  \node(a11) at (1,1){};
  \node(a14) at (1,4){};
  \fill[yellow,opacity=0.5]
  (a00.center) -- (a03.center) -- (a14.center) -- (a11.center) -- cycle;

\draw(0,0)node(a00){$35$};\draw(0,1)node(a01){$35$};\draw(0,2)node(a02){$33$};\draw(0,3)node(a03){$29$};\draw(0,4)node(a04){$20$};\draw(0,5)node(a05){$0$};
\draw(1,1)node(a11){$51$};\draw(1,2)node(a12){$51$};\draw(1,3)node(a13){$49$};
\draw(1,4)node(a14){$45$};\draw(1,5)node(a15){$28$};							                          \draw(2,2)node(a22){$61$};\draw(2,3)node(a23){$60$};\draw(2,4)node(a24){$57$};
\draw(2,5)node(a25){$48$};                                                                          \draw(3,3)node(a33){$63$};\draw(3,4)node(a34){$61$};\draw(3,5)node(a35){$57$};
\draw(4,4)node(a44){$64$};\draw(4,5)node(a45){$61$};		                
\draw(5,5)node(a55){$64$};
\foreach\i/\ii in{0/-1,1/0,2/1,3/2,4/3,5/4}
 \foreach\j/\jj in{0/-1,1/0,2/1,3/2,4/3,5/4}{
  \ifnum\i<\j \draw[draw=red](a\i\jj)--(a\i\j); \fi
  \ifnum\i>\j\else\ifnum\i>0 \draw(a\ii\j)--(a\i\j); \draw[draw=blue, ultra thick](a\ii\jj)--(a\i\j); \fi\fi

   }
\end{tikzpicture}\hspace{1cm}
\begin{tikzpicture}[x={(1cm*0.5,-\rootthree cm*0.5)},y={(1cm*0.5,\rootthree cm*0.5)}]
  \draw(0,0)node(a00){$16$};\draw(0,1)node(a01){$16$};\draw(0,2)node(a02){$16$};\draw(0,3)node(a03){$16$};\draw(0,4)node(a04){$8$};
  \draw(1,1)node(a11){$10$};\draw(1,2)node(a12){$9$};\draw(1,3)node(a13){$8$};\draw(1,4)node(a14){$3$};
  \draw(2,2)node(a22){$2$};\draw(2,3)node(a23){$1$};\draw(2,4)node(a24){$0$};
  \draw(3,3)node(a33){$1$};\draw(3,4)node(a34){$0$};
  \draw(4,4)node(a44){$0$};

\end{tikzpicture}
\end{center}
\caption{The hive on the left maps under $\bsd$ to the GT pattern on the right. The flat NE rhombi are highlighted.}
\label{fig:example-map}
\end{figure}

\begin{proposition}\label{prop:bsd-props}

Let $\lambda,\mu,\nu \in \pn$ with  $|\lambda| + |\mu| = |\nu|$. Then
\begin{enumerate}
    \item  $\bsd: \Hive(\lmn) \to \GT(\mu)$ is an injective map. 
    
    \item Let $h \in \Hive(\lmn)$. Then $ h \in \Hivez(\lmn) \iff \bsd h \in \GTz(\mu) $. 
    
    \item $\bg \,\circ\, \bsd$ is a bijection between $\Hivez(\lmn)$ and $\Tab_\lambda^\nu(\mu)$.
    \end{enumerate}
\end{proposition}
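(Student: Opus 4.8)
The plan is to take the three assertions in order, deriving (1) and (2) from an explicit formula that reconstructs a hive from its image, and then spending the real effort on (3). Throughout I will write $h_i(0), h_i(1), \dots, h_i(i)$ for the entries of the $i$-th horizontal section of $h$, so that $h_i(0) = \ps[\lambda]_i = \sum_{m \le i}\lambda_m$ is the left-boundary label and row $i$ of $A := \bsd h$ is $(a_{i1}, \dots, a_{ii})$ with $a_{il} = h_i(l) - h_i(l-1)$. For (1), each entry of $\bsd h$ is a difference of two vertex labels, hence a linear functional of $h$, so $\bsd$ is the restriction to $\Hive(\lmn)$ of a linear map, and its image lies in $\GT(\mu)$ as already noted. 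Injectivity comes from the reconstruction $h_i(j) = \ps[\lambda]_i + \sum_{l=1}^{j} a_{il}$, which recovers $h$ from $A$ (the left boundary being fixed by $\lambda$). Part (2) is immediate from the same formula: differences of integers are integers, and conversely, since $\lambda$ is an integer partition the values $\ps[\lambda]_i$ are integers, so the reconstruction formula exhibits every $h_i(j)$ as an integer.

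For (3), parts (1) and (2) together with the bijection $\bg\colon \GTz(\mu)\to \Tab(\mu)$ show immediately that $\bg\circ\bsd$ is an injection $\Hivez(\lmn)\hookrightarrow \Tab(\mu)$, so the task is to identify its image with $\Tab_\lambda^\nu(\mu)$. First I would dispose of the weight condition: summing row $k$ of $A$ telescopes to $h_k(k)-h_k(0) = \ps[\nu]_k - \ps[\lambda]_k$, whence the weight of $A$, and so of $T=\bg(A)$, is $\nu-\lambda$ and $\ssword[\lambda]*b_T$ automatically has weight $\nu$; conversely, reconstructing $h$ from an arbitrary $A = \bg^{-1}(T)\in\GTz(\mu)$ always gives the correct bottom and left boundaries, while the right boundary equals $\ps[\nu]$ exactly when $T$ has weight $\nu-\lambda$. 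Since $A\in\GT(\mu)$ is exactly the NE- and SE-slanted rhombus inequalities of $h$, the only remaining hive inequalities are those of the vertical (blue) diamonds. Thus (3) reduces to the single equivalence: for $h$ reconstructed from $A=\bg^{-1}(T)$, all blue diamonds have $\cont\ge 0$ if and only if $\ssword[\lambda]*b_T$ is dominant.

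To establish this equivalence I would compute, using the reconstruction formula, the content of the blue diamond with waist $\{h_i(j),h_i(j+1)\}$ and acute vertices $\{h_{i-1}(j),\,h_{i+1}(j+1)\}$. The partial-sum boundary contributes $2\ps[\lambda]_i-\ps[\lambda]_{i-1}-\ps[\lambda]_{i+1}=\lambda_i-\lambda_{i+1}$, while the interior terms collapse, so that (writing $\#_a(v)$ for the number of $a$'s in a word $v$)
\[ \cont = (\lambda_i-\lambda_{i+1}) + \Bigl(\textstyle\sum_{l=1}^{j}NE_{il}-\sum_{l=1}^{j+1}NE_{i+1,l}\Bigr). \]
By the defining property of $\bg$, $\sum_{l=1}^{j}NE_{il}$ is the number of $\boxed{i}$ in rows $1,\dots,j$ of $T$ and $\sum_{l=1}^{j+1}NE_{i+1,l}$ the number of $\boxed{i+1}$ in rows $1,\dots,j+1$. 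These equal $\#_i(v)-\#_{i+1}(v)$ for the prefix $v$ of $b_T$ that reads rows $1,\dots,j$ in full and then row $j+1$ rightward up to (but not past) its $\boxed{i}$'s; semistandardness guarantees that, reading right-to-left, each row's $\boxed{i+1}$'s precede its $\boxed{i}$'s, so this prefix has counted all $\boxed{i+1}$'s and no $\boxed{i}$'s of row $j+1$. Hence nonnegativity of this diamond is precisely the ballot inequality at the prefix $v$.

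Finally I would argue that these special prefixes suffice. As $\ssword[\lambda]$ is itself dominant, only prefixes extending into $b_T$ need checking, and along $b_T$ the statistic $\#_i-\#_{i+1}$ decreases only on $\boxed{i+1}$'s and increases only on $\boxed{i}$'s; since in each row (read right-to-left) the $\boxed{i+1}$'s precede the $\boxed{i}$'s and no $\boxed{i+1}$ intervenes between one row's $\boxed{i}$'s and the next row's $\boxed{i+1}$'s, every local minimum of $\#_i-\#_{i+1}$ is attained at one of the prefixes $v$ above. This makes the full ballot condition equivalent to nonnegativity of all blue-diamond contents, completing (3). I expect the main obstacle to be exactly this last step: verifying that the blue diamonds, indexed by $(i,j)$ and including the boundary diamonds near the top and left edges (where one of the two counts is empty), record precisely the local minima of the ballot statistic, with the edge cases matching the prefixes reading only an initial segment of $b_T$. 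Everything else is bookkeeping with the reconstruction formula.
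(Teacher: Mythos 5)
Your proposal is correct and takes essentially the same route as the paper's proof: injectivity via the partial-sum reconstruction of $h$ from $\bsd h$ (the left boundary being pinned by $\lambda$), the weight by telescoping row sums, the NE/SE rhombus inequalities matching the Gelfand--Tsetlin conditions, and the key identity equating each vertical-diamond content with the ballot difference $N_{i,k}-N_{i+1,k+1}$ at a distinguished prefix of $\ssword[\lambda]*b_T$, which is exactly the paper's computation. The one point you gloss --- the last ballot condition for the pair $(i,i+1)$, at the prefix ending after the $(i+1)$'s of row $i+1$, which corresponds to no diamond and instead holds because the weight $\nu\in\pn$ gives $\nu_i\ge\nu_{i+1}$ --- is glossed identically in the paper, and your justification of why the diamond prefixes suffice is if anything more detailed than the paper's.
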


\begin{proof}
  (1)  Let $h = (h_0,h_1,\ldots,h_n)$ and $h' = (h'_0,h'_1,\ldots,h'_n)$ be two elements in $\Hive(\lmn)$, with $h_i,h'_i \in \mathbb{R}^{i+1}$ for $0 \leq i \leq n$. Let $h_i = (h_{i1},h_{i2},\ldots,h_{i(i+1)})$ and $h'_i = (h'_{i1},h'_{i2},\ldots,h'_{i(i+1)})$. We have $h_{01} = h'_{01}  = 0$ , $h_{i1} = \sum_{j=1}^i \lambda_j = h'_{i1}$  for $1 \leq i \leq n$.

Suppose that $\bsd h = \bsd h'$ that is $\sd h_i = \sd h'_i$ for $1 \leq i \leq n$. We will prove $h_{ik} = h'_{ik}$ by induction on $k$, for each fixed $i$. For the base case we have $h_{i1} = h'_{i1}$. Let $p \geq 2$ and suppose that $h_{ik} = h'_{ik}$ for $k < p$. Since $\sd h_{i} = \sd h'_{i}$, we have $h_{i(k+1)} - h_{ik} = h'_{i(k+1)} - h'_{ik}$ for all $1 \leq k \leq i$. Since $h_{ik} = h'_{ik}$ for $k<p$, this implies that $h_{ip} = h'_{ip}$. Hence we have $h_i = h'_i$ for $0 \leq i \leq n$.

\medskip(2) This is clear from the definition of $\bsd$ and the fact that the boundary labels of $h$ are integers.

\begin{figure}
\begin{center}
\begin{tikzpicture}[x={(1cm*0.5,-\rootthree cm*0.5)},y={(1cm*0.5,\rootthree cm*0.5)}]
  \draw(0,0)node(a00){$h_{n1}$};\draw(0,1)node(a01){$\cdot$};\draw(0,2)node(a03){$h_{21}$};\draw(0,3)node(a04){$h_{11}$};\draw(0,4)node(a05){$h_{01}$};
  \draw(1,1)node(a11){$h_{n2}$};\draw(1,2)node(a12){$\cdot$};\draw(1,3)node(a14){$h_{22}$};\draw(1,4)node(a15){$h_{12}$};
  \draw(2,2)node(a22){$\cdot$};\draw(2,3)node(a23){$\cdot$};\draw(2,4)node(a25){$h_{23}$};
  \draw(3,3)node(a33){$\cdot$};\draw(3,4)node(a34){$\cdot$};
  \draw(4,4)node(a55){$h_{n(n+1)}$};

\end{tikzpicture}\hspace{1cm}
\begin{tikzpicture}[x={(1cm*0.5,-\rootthree cm*0.5)},y={(1cm*0.5,\rootthree cm*0.5)}]
  \draw(0,0)node(a00){$a_{n1}$};\draw(0,1)node(a01){$\cdot$};\draw(0,2)node(a03){$a_{21}$};\draw(0,3)node(a04){$a_{11}$};
  \draw(1,1)node(a11){$a_{n2}$};\draw(1,2)node(a12){$\cdot$};\draw(1,3)node(a14){$a_{22}$};
  \draw(2,2)node(a22){$\cdot$};\draw(2,3)node(a23){$\cdot$};
  \draw(3,3)node(a33){$a_{nn}$};
\end{tikzpicture}
\end{center}
\caption{The left array is a hive $h$ and the right array is a GT pattern  $\bsd h = (a_{ij})$ such that $a_{ij} = h_{i(j+1)} - h_{ij}$ for $1 \leq j \leq i \leq n$.}
\label{fig:h-gth}
\end{figure}

\medskip(3) Let $h$ be an element of $\Hivez(\lmn)$. It follows from Proposition~\ref{prop:hivegt} that $T = \bg (\bsd h)$ is in $\Tab(\mu)$. We want to show that  $T \in \Tab_\lambda^\nu(\mu)$. Consider $h = (h_0,h_1,\ldots,h_n)$ where $h_i = (h_{i1},h_{i2},\ldots,h_{i(i+1)})$; then $\bsd h = (a_{ij})$ for $1 \leq j \leq i \leq n$ where $a_{ij} = h_{i (j+1)} - h_{ij}$ (see~figure~\ref{fig:h-gth}). From \S~\ref{sec:gt}, it follows that the number of $\boxed{i}$ in row $j$ of $T$ is $a_{ij} - a_{(i-1)j}$ for $j \leq i$ (setting $a_{i-1,i}=0$).

We want to show that $\supstd[\lambda]*b_T$ is a dominant word. Let $b_T =b_{T_1}*b_{T_2}*\cdots*b_{T_n}$ where $b_{T_k}$ is the reverse row reading word of the $k^{th}$-\text{row} of $T$. For each $1 \leq i \leq n$ and $0 \leq k \leq i$, let $N_{i,k}$ denote the number of occurrences of $i$ in the word $w_k:=\supstd[\lambda]*b_{T_1}*b_{T_2}*\cdots*b_{T_k}$. It follows easily from definition~\ref{def:domword} that $\supstd[\lambda]*b_T$ is a dominant word if and only if $N_{i,k} \geq N_{i+1, k+1}$ for all $1 \leq i <n$ and $0 \leq k \leq i$.
We have
\begin{equation}\label{eq:i2}
    N_{i,k} = \lambda_i + (a_{i1} - a_{i-1,1})+(a_{i2} - a_{i-1,2})+\cdots+(a_{i,k} - a_{(i-1),k})
\end{equation}
Using the definition of $\bsd$ to rewrite this in terms of $h$, we get:
\begin{equation}\label{eq:i3}
  N_{i,k} = h_{ik}  - h_{(i-1),k}
\end{equation}
Thus,
\begin{equation}\label{eq:vrhom}
  N_{i,k} - N_{i+1,k+1} = (h_{ik}  - h_{(i-1),k}) - (h_{(i+1),(k+1)} - h_{i,(k+1)})
\end{equation}
which is $\geq 0$ by the corresponding vertical rhombus inequality in $h$.
This proves that $\supstd[\lambda]*b_T$ is a dominant word.

Next we compute the weight of $\supstd[\lambda]*b_T$. The number of times $i$ appears in the $\supstd[\lambda]*b_T$ is:
\[\lambda_i + (a_{i1}-a_{(i-1),1})+(a_{i,2}-a_{(i-1),2})+\cdots+(a_{i,(i-1)} - a_{(i-1),(i-1)})+a_{ii} \]
\[ = \lambda_i + (h_{i2}-h_{(i-1),2}) - (h_{i1}-h_{(i-1),1}) + \cdots+ h_{(i+1),i} - h_{ii} \]
\[ = \lambda_i + (h_{i2}-h_{(i-1),2}) - \lambda_i + \cdots+ h_{i,(i+1)} - h_{ii} = h_{i,(i+1)} - h_{(i-1),i} = \nu_i  \]
This shows that the weight of the word $\supstd[\lambda]*b_T$ is $\nu$. Thus $\bg \,\circ\, \bsd$ is an injective map from $\Hivez(\lmn)$ to $\Tab_\lambda^\nu(\mu)$.

The surjectivity follows from similar arguments.
Let $T \in \Tab_\lambda^\nu(\mu)$; then $\bg^{-1}(T) = (a_{ij}) \in \GT(\mu)$ where $a_{ij}$ are defined recursively by $a_{(i-1),i} = 0$ and for $i \geq j$
\[ a_{ij} = (\# \ \boxed{i} \ \text{ in the } j^{th } \text{-row of } T) + a_{(i-1)j}.\]
Now let $h= (h_0,h_1,\ldots,h_n)$ where $h_0=(0)$ and $h_i = (h_{i1},h_{i2},\ldots,h_{i,(i+1)})$ for $ 1 \leq i \leq n$ are defined recursively by
\[ h_{i1} = \sum_{j=1}^i \lambda_j \text{ and } h_{i,(j+1)} = a_{ij}+h_{ij} \text{ for } 1 \leq j \leq i.\]
As before, we consider $h = (h_0,h_1,h_2,\ldots,h_n)$ as vertex labels of the big hive triangle $\bighive$ (see figure \ref{fig:h-gth}). It follows from the definition of $h$ that $\bg \circ \bsd(h) = T$. It only remains to show that $h \in \Hivez(\lmn)$. 

The NE and SE rhombi inequalities in $h$ hold since they transform under $\bsd$ to the corresponding Gelfand--Tsetlin inequalities in $\bsd (h) = (a_{ij}) \in \GT(\mu)$. The vertical rhombus inequalities in $h$ follow from equation~\eqref{eq:vrhom} since $\supstd[\lambda]*b_T$ is a dominant word. That the boundary labels of $h$ are $\ps[\lambda]$ (left edge, top to bottom), $|\lambda|+\ps[\mu]$ (bottom edge, left to right) and $\ps[\nu]$ (right edge, top to bottom) is easily verified.
\end{proof}


Note that the last assertion of proposition \ref{prop:bsd-props} implies that $|\Hivez(\lmn)| = \lr$ (and is a variation of proofs in \cite{buch}, \cite{pak-vallejo}). 

\subsection{Hive Kogan faces}
As before,  fix $\lambda,\mu, \nu \in \pn$ with  $|\lambda| + |\mu| = |\nu|$ and consider $\bsd: \Hive(\lmn) \to \GT(\mu)$. 
Given $F \subseteq \{(i,j): n \geq i > j \geq 1\}$, recall that $\kogan(\mu,F)$ is the face of $\GT(\mu)$ satisfying
\[ \kogan(\mu,F) = \{ A \in \GT(\mu): \NE_{ij}(A)=0 \text{ for } (i,j) \in F\} \]
We define $\koganH(\lmn, F) :=  \bsd^{-1} \kogan(\mu,F)$. In light of \eqref{eq:rnequal}, we conclude that this inverse image is given by
\[ \koganH(\lmn, F) = \{ h \in \Hive(\lmn): R_{ij}(h)=0 \text{ for } (i,j) \in F\}. \]
In other words, this is the face of $\Hive(\lmn)$ comprising hives in which $R_{ij}$ is flat for all $(i,j) \in F$. 
\begin{definition}\label{def:hivekoganface}
    We call $\koganH(\lmn, F)$ a {\em hive Kogan face} say that it is {\em reduced} if $F$ is.
\end{definition}

For $w \in S_n$, define the union of hive Kogan faces:
\begin{equation}\label{eq:hivekogandef}
  \koganH(\lmn, w) := \bsd^{-1} \kogan(\mu, w) = \bigcup \koganH(\lmn, F)
  \end{equation}
the union being over reduced $F$ for which $\varpi(F) = w$ (see definition~\ref{def:gtkogan}).

We have the following additive and scaling properties that readily follow from the definitions:
\begin{lemma}\label{lem:semiscale} Let $k \in \reals_{>0}$, $\lambda_i, \mu_i, \nu_i \in \pn$, $i=1,2$ and $F \subseteq \{(i,j): n \geq i > j \geq 1\}$. Then
\begin{align}
\koganH(\lambda_1, \mu_1, \nu_1, F) + \koganH(\lambda_2, \mu_2, \nu_2, F) &\subseteq \koganH(\lambda_1+\lambda_2, \mu_1+\mu_2, \nu_1+\nu_2, F) \label{eq:semigpprop}\\
  \koganH(k\lambda, k\mu, k\nu, F) &= k \, \koganH(\lmn, F) \notag\\
  \koganH(k\lambda, k\mu, k\nu, w) &= k \, \koganH(\lmn, w) \label{eq:scalingprop}
\end{align}
\end{lemma}
We now have the ingredients necessary to prove one of our main theorems, which gives
a hive description of the $\lrw$:
\begin{thm}\label{thm:hive-desc-lrw}
Let $\lmn \in \pn$ and $w \in S_n$. Then 
  $\lrw  = \# \koganHz(\lmn, \wnot w)$.
\end{thm}
\begin{proof}
  Theorem~\ref{thm:joseph} implies that 
  \[\lrw = |\demcrys_\lambda^\nu(\mu, w)| = |\Tab_\lambda^\nu(\mu) \cap \crys_w(\mu)|.\]
  Putting together Propositions~\ref{prop:fujita}, \ref{prop:bsd-props}, we have maps:
  \[ \Hivez(\lmn) \stackrel{\bsd}{\hookrightarrow} \GTz(\mu) \stackrel{\bg}{\rightarrow} \Tab(\mu) \]
  with $\bg\,\circ\,\bsd$ defining a bijection from $\Hivez(\lmn)$ to $\Tab_\lambda^\nu(\mu)$. Thus, the inverse image
  \[ (\bg \,\circ\,\bsd)^{-1} \left(\Tab_\lambda^\nu(\mu) \cap \crys_w(\mu)\right) = (\bg \,\circ\,\bsd)^{-1} \left(\crys_w(\mu)\right) = \bsd^{-1} \bg^{-1} \crys_w(\mu) \]
  has cardinality $\lrw$. But Proposition~\ref{prop:fujita} implies that $\bg^{-1} \crys_w(\mu)  = \koganz(\mu,w_0w)$ and $\bsd^{-1} \koganz(\mu,w_0w) = \koganHz(\lmn,w_0w)$ by definition and Proposition~\ref{prop:bsd-props}(2). 
\end{proof}

Putting together equation~\eqref{eq:scalingprop} and Theorem~\ref{thm:hive-desc-lrw}, we obtain a strengthening of Corollary~\ref{cor:scaling} in type $A$ (cf. Remark~\ref{rem:scaling}):
\begin{corollary}\label{cor:corscaling}
Let $\lmn \in \pn$ and $w \in S_n$. Then $c_{k\lambda, k\mu}^{k\nu}(w) \geq c_{\lambda \mu}^{\nu}(w)$ for all $k \geq 1$. In particular, $c_{\lambda \mu}^{\nu}(w) > 0$ implies that $c_{k\lambda, k\mu}^{k\nu}(w) > 0$ for all $k \geq 1$.  
\end{corollary}
We note that when $w$ is of special form, Theorem~\ref{thm:mainthm-312}(2) makes a stronger assertion than Corollary~\ref{cor:corscaling}, namely that the semigroup property holds for $w$ (see \S\ref{sec:sgsubsec} below).

\section{Pattern-avoiding permutations}\label{sec:patt312}
\subsection{} We recall the classical notion of pattern-avoidance.
\begin{definition}\label{def:pattavoid}
  Let $1 \leq k \leq n$. Let $\sigma$, $\tau$ be permutations of $\{1,2,\ldots,n\}$ and $\{1,2,\ldots,k\}$ respectively. We say that $\sigma$ {\em contains} $\tau$ if there exist $1 \leq i_1 < i_2 < \cdots < i_k \leq n$ such that for all distinct pairs $p, q \in \{1, 2, \ldots, k\}$, we have $\sigma(i_p) < \sigma(i_q)$ iff $\tau(p) < \tau(q)$.  We say that $\sigma$ is a
  $\tau${\em-avoiding permutation} if $\sigma$ does not contain $\tau$.
\end{definition}

It is easy to see from this definition that a permutation $w \in S_n$ is $312$-avoiding if there do not exist $1\leq i<j<k \leq n$ such that $w(j) < w(k) < w(i)$. Likewise, $w$ is $231$-avoiding if there do not exist $1\leq i<j<k \leq n$ such that $w(k) < w(i) < w(j)$ .

Our primary focus will be on $312$- or $231$-avoiding permutations. We observe that $w$ is $231$-avoiding if and only if $w^{-1}$ is $312$-avoiding. We also recall from Proposition~\ref{prop:properties-clmn} that for any $w$, we have 
\begin{equation}\label{eq:symm}
  \lrw[w^{-1}] = \lrevw
\end{equation}

\subsection{Saturation and semigroup theorem for type $A$}
Let $\lie g = \mathfrak{sl}_n$ for $n \geq 2$. Its Weyl group $W=W(\lie g)$ is isomorphic to $S_n$. We recall that a {\em proper Young subgroup} of $S_n$ is a subgroup of the form $H=S_{n_1} \times S_{n_2} \times \cdots \times S_{n_r}$ with $r \geq 2$ and $\sum_{i=1}^r n_i = n$, comprising permutations of $1, 2, \ldots, n$ that leave the first $n_1$ numbers invariant, the next $n_2$ numbers invariant and so on. Given $w \in H$, we write $w=(w_1, w_2, \ldots, w_r)$ and call the $w_i \in S_{n_i}$ the {\em components of} $w$. Notice that the $w_i$ commute pairwise. These notions may equivalently be formulated in the setting of Section~\ref{sec:wcomps}.

\begin{definition}\label{def:splform}
    A permutation $w \in S_n$ is said to be of {\em special form} if (a) it is $312$-avoiding, or (b) $231$-avoiding, or (c) belongs to a proper Young subgroup and each of its components $w_i$ is either $312$-avoiding or $231$-avoiding.
\end{definition}

The following theorem is one of the main results of this paper:
\begin{thm}\label{thm:mainthm-312}
    Let $\lie g = \mathfrak{sl}_n$ for $n \geq 2$. Let $w \in W(\lie g) = S_n$ be a permutation of special form. Then 
    \begin{enumerate}
        \item $w$ has the saturation property.
        \item $w$ has the strong semigroup (and hence the semigroup) property.
    \end{enumerate}
\end{thm}
Theorem~\ref{thm:mainthm-312}(2) will be proved below in Section~\ref{sec:sgsubsec}. 
We observe that $\wnot$ is $312$-avoiding (and also $231$-avoiding). Theorem~\ref{thm:mainthm-312}(1) thus generalizes the Knutson--Tao saturation theorem. The proof of Theorem~\ref{thm:mainthm-312}(1) uses ideas of Knutson-Tao, and occupies Sections \ref{sec:inchives} and \ref{sec:hivehorncones}.

We remark that we do not know if the saturation and semigroup properties hold for general $w \in W$, though we have been unable to find any counterexamples so far through our computer explorations on Sagemath.

\begin{remark}
    Permutations of {\em special form} occur in work of Postnikov-Stanley \cite[Theorem 13.4, Corollary 13.5, Remark 15.5]{PS}.
\end{remark}
\subsection{Hive Kogan faces and $312$-avoiding permutations}\label{sec:fw-312}
Let $w \in S_n$ be $312$-avoiding. Then, it is easy to see that $\wnot w$ is a $132$-avoiding permutation. We quote the following well-known characterization of such permutations (see for instance \cite[Proposition 2.2.1]{kogan} and \cite{PS,smirnov}).
\begin{proposition}\label{prop:132av}
  Let $\sigma \in S_n$. Then $\sigma$ is $132$-avoiding if and only if there exists a unique reduced $F_\sigma \subset \{(i,j): n \geq i > j \geq 1\}$ such that $\varpi(F_\sigma) = \sigma$.
  Further, there exists a sequence $1 \leq b_1 \leq b_2 \leq \cdots \leq b_{n-1} \leq n$ with $b_j \geq j$ for all $j$ such that $F_\sigma$ has the following form:
  \begin{equation}\label{eq:formfsigma}
    F_\sigma = \{(i,j): 1 \leq j < n, \; b_j < i \leq n\} 
  \end{equation}
  \end{proposition}
\begin{figure}
\begin{center}
    \begin{tikzpicture}[scale=0.9]
      \node (a00) at (0,0*1.732) {};
      \node (a01) at (0.5,0.5*1.732) {};
      \node (a02) at (1,1*1.732) {};
      \node (a03) at (1.5,1.5*1.732) {};
      \node (a04) at (2,2*1.732) {};
      \node (a05) at (2.5,2.5*1.732) {};
      \node (a11) at (1,0*1.732) {};
      \node (a12) at (1.5,0.5*1.732) {};
      \node (a13) at (2,1*1.732) {};
      \node (a14) at (2.5,1.5*1.732) {};
      \node (a15) at (3,2*1.732) {};
      \node (a22) at (2,0*1.732) {};
      \node (a23) at (2.5,0.5*1.732) {};
      \node (a24) at (3,1*1.732) {};
      \node (a25) at (3.5,1.5*1.732) {};
      \node (a33) at (3,0*1.732) {};
      \node (a34) at (3.5,0.5*1.732) {};
      \node (a35) at (4,1*1.732) {};
      \node (a44) at (4,0*1.732) {};
      \node (a45) at (4.5,0.5*1.732) {};
      \node (a55) at (5,0*1.732) {};

      \draw[-] [draw=black,thick] (a00.center) -- (a11.center);
      \draw[-] [draw=black,thick] (a11.center) -- (a22.center);
      \draw[-] [draw=black,thick] (a22.center) -- (a33.center);
      \draw[-] [draw=black,thick] (a33.center) -- (a44.center);
      \draw[-] [draw=black,thick] (a44.center) -- (a55.center);
      
      \draw[-] [draw=black,thick] (a01.center) -- (a12.center);
      \draw[-] [draw=black,thick] (a12.center) -- (a23.center);
      \draw[-] [draw=black,thick] (a23.center) -- (a34.center);
      \draw[-] [draw=black, thick] (a34.center) -- (a45.center);

      \draw[-] [draw=black,thick] (a02.center) -- (a13.center);
      \draw[-] [draw=black,thick] (a13.center) -- (a24.center);
      \draw[-] [draw=black,thick] (a24.center) -- (a35.center);
      
      \draw[-] [draw=black,thick] (a03.center) -- (a14.center);
      \draw[-] [draw=black,thick] (a14.center) -- (a25.center);
      
      \draw[-] [draw=black,thick] (a04.center) -- (a15.center);
      \draw[-] [draw=black, thick] (a00.center) -- (a01.center);
      \draw[-] [draw=black,thick] (a01.center) -- (a02.center);
      \draw[-] [draw=black,thick] (a02.center) -- (a03.center);
      \draw[-] [draw=black,thick] (a03.center) -- (a04.center);
      \draw[-] [draw=black,thick] (a04.center) -- (a05.center);
      
      \draw[-] [draw=black,thick] (a11.center) -- (a12.center);
      \draw[-] [draw=black,thick] (a12.center) -- (a13.center);
      \draw[-] [draw=black,thick] (a13.center) -- (a14.center);
      \draw[-] [draw=black,thick] (a14.center) -- (a15.center);

      \draw[-] [draw=black,thick] (a22.center) -- (a23.center);
      \draw[-] [draw=black,thick] (a23.center) -- (a24.center);
      \draw[-] [draw=black,thick] (a24.center) -- (a25.center);
      
      \draw[-] [draw=black,thick] (a33.center) -- (a34.center);
      \draw[-] [draw=black,thick] (a34.center) -- (a35.center);
      
      \draw[-] [draw=black,thick] (a44.center) -- (a45.center);

       \draw[-] [draw=black,thick] (a05.center) -- (a15.center);
       \draw[-] [draw=black,thick] (a15.center) -- (a25.center);
       \draw[-] [draw=black,thick] (a25.center) -- (a35.center);
       \draw[-] [draw=black,thick] (a35.center) -- (a45.center);
       \draw[-] [draw=black,thick] (a45.center) -- (a55.center);

       \draw [thick, draw=black, fill=yellow, fill opacity=0.5]
       (a00.center) -- (a03.center) -- (a14.center) -- (a13.center) -- (a24.center) -- (a23.center)-- (a45.center) -- (a44.center) -- cycle;
     \end{tikzpicture}
  \end{center}
\caption{A typical configuration of rhombi in $F_\sigma$ for $\sigma$ a $132$-avoiding permutation.}  
\label{fig:rhombi-rij}
\end{figure}
Pictorially, the union of the rhombi $R_{ij}$, $(i,j) \in F_\sigma$ forms a left-and-bottom justified region in the big hive triangle $\bighive$ (figure \ref{fig:rhombi-rij}). For $\sigma$ as in the above proposition, it follows from \eqref{eq:koganwdef} that $\kogan(\mu,\sigma) = \kogan(\mu, F_\sigma)$ for all $\mu \in \pn$.

We also note in passing the following description of the $b_i$ occurring in Proposition~\ref{prop:132av} (see \cite{PS}):
\begin{lemma}\label{lem:bidesc}
    For $\sigma \in S_n$ a $132$-avoiding permutation, the $b_i$ occurring in Proposition~\ref{prop:132av} are given by $$b_i = n - \left|\{j>i : \sigma(j) < \sigma(i)\}\right|. \qed$$
\end{lemma}
We now have the following polytopal descriptions of the $\lrw$ for $312$- and $231$-avoiding permutations:
\begin{proposition}\label{prop:312-231}
Let $w \in S_n$ be a $312$-avoiding permutation. Then,
\[ \lrw = \# \koganHz(\lmn,F_{w_0 w}) \]
If $w$ is 231-avoiding, then
\[ \lrw = \# \koganHz(\mln,F_{w_0 w^{-1}}) \]
\end{proposition}
\begin{proof}
As already remarked, the second assertion follows from the first by replacing $w$ by $w^{-1}$ and using Proposition~\ref{prop:properties-clmn}(d).  The first assertion follows from Theorem~\ref{thm:hive-desc-lrw}, Proposition~\ref{prop:132av} and the fact that $w_0 w$ is $132$-avoiding if $w$ is $312$-avoiding.
\end{proof}

\subsection{Proof of Theorem~\ref{thm:mainthm-312}(2): the strong semigroup property}\label{sec:sgsubsec}
We observe that Proposition~\ref{prop:312-231} and equation~\eqref{eq:semigpprop} together establish  Theorem~\ref{thm:mainthm-312}(2) when $w$ is $312$- or $231$-avoiding. The remaining case - when $w$ is a commuting product of such permutations in a Young subgroup - follows from Proposition~\ref{prop:w1w2cor}.

\subsection{Steinberg multiplicity formula} 
In this section, we derive an analogue of the Steinberg tensor product multiplicity formula for the $\lrw$ when $w$ is of special form.
Given $w$ $312$-avoiding, let $\sigma = \wnot w$ (this is $132$-avoiding). Let $b_i$ be the sequence corresponding to $\sigma$ as in Proposition~\ref{prop:132av} and Lemma~\ref{lem:bidesc}. Following \cite{PS}, we define the following subsets of the Weyl group (i.e., $S_n$) and the positive roots:
\begin{align*}
    W_w &= \{u \in S_n: u(i) \leq b_i \text{ for } 1 \leq i \leq n\} \\
    \Phi^+_{u,w} &= \{\varepsilon_i - \varepsilon_j: 1 \leq i < j \leq b_{u^{-1}(i)}\} \subseteq \Phi^+
\end{align*}
Let $\mu \in \pn$. We have the following character formula for the Demazure module $V_w(\mu)$ \cite{PS}:
\begin{equation}\label{eq:demchar312}
    \ch V_w(\mu) = \sum_{u \in W_w} \sgn(u) \frac{e^{u(\mu+\rho)-\rho}}{\prod_{\alpha \in \Phi^+_{u,w}} (1-e^{-\alpha})}
\end{equation}
We define an analogue of the Kostant partition function:
\[ K_{u,w}(\beta) = \text{coefficient of } e^{-\beta} \text{ in }\prod_{\alpha \in \Phi^+_{u,w}} (1-e^{-\alpha})^{-1}\]
In other words, $K_{u,w}(\beta)$ counts the number of partitions of $\beta$ into a sum of roots from $\Phi^+_{u,w}$. 
\begin{thm}\label{thm:postan312}
Let $w$ be $312$-avoiding and $\lmn \in \pn$. Then
\[ \lrw = \sum_{y \in W} \sum_{u \in W_w} \sgn(uy) K_{u,w}\left( \lambda + u(\mu+\rho) - y(\nu+\rho)\right)\]
\end{thm}
We omit the proof of this theorem, since it follows verbatim the proof of the classical Steinberg multiplicity formula (see for example \cite{Humphreys-book}). In place of the Weyl character formula for $\ch V(\mu)$ which appears in the latter, we use equation~\ref{eq:demchar312}.

We can obtain an analogous formula for the $231$-avoiding case via the relation $\lrw = c_{\mu\lambda}^{\nu}(w^{-1})$. When $w$ is a commuting product of such elements in a Young subgroup, we appeal to Proposition~\ref{prop:w1w2mult} to obtain a formula as a product.

\section{Increasable subsets for hives}\label{sec:inchives}
\subsection{}
As a first step toward the proof of our saturation theorem, we 
single out certain Kogan faces of the hive polytope that are closed under the operation of increasing the hive labels. 

More precisely, let $\lmn \in \reals^n$ with $|\lambda| + |\mu| = |\nu|$ and let $h \in \Hive(\lmn)$. Given a subset $S$ of the interior vertices of $\bighive$, consider the labelling $I_S$ (the indicator function) of the vertices of $\bighive$ which assigns the label $1$ to vertices of $S$ and $0$ to the remaining. 
\begin{definition}\label{def:incsub}
A subset $S$ of the interior vertices of $\bighive$ is said to be {\em increasable} for $h \in \Hive(\lmn)$ if there exists $\epsilon > 0$ such that $h' = h + \epsilon I_S \in \Hive(\lmn)$.
\end{definition}
  This notion is one of the central ideas of Knutson--Tao's proof of the saturation conjecture in the hive formalism. We quote the following important result \cite{KTHives,buch}:
  \begin{proposition}\label{prop:kt-increasable} (Knutson--Tao) Let $\lmn \in \reals^n$ be regular (i.e., $\lambda_i \neq \lambda_j$ if $i \neq j$, and likewise for $\mu, \nu$) with $|\lambda| + |\mu| = |\nu|$. Let $h$ satisfy the following properties:
\begin{enumerate}
\item $h$ is a vertex of the hive polytope $\Hive(\lmn)$,
\item $h$ has no increasable subsets.
\end{enumerate}
Then each interior label of $h$ is an integral linear combination of its boundary labels. In particular, if $\lmn \in \pn$, then $h \in \Hivez(\lmn)$.
\end{proposition}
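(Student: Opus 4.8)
The plan is to follow the rigidity argument of Knutson--Tao \cite{KTHives}, in the hive formulation of Buch \cite{buch}. The first step is to translate hypothesis (ii) into a combinatorial statement about the flat rhombi of $h$. For a subset $S$ of interior vertices and small $\epsilon > 0$, passing from $h$ to $h + \epsilon I_S$ changes $\cont(R)$ by $\epsilon(o_R - a_R)$, where $o_R$ (resp. $a_R$) is the number of obtuse-angled (resp. acute-angled) vertices of $R$ lying in $S$; boundary vertices are never moved. Since the inequalities at non-flat rhombi have strictly positive slack, $h + \epsilon I_S$ is again a hive for small $\epsilon$ exactly when $o_R \geq a_R$ for every flat rhombus $R$. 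Thus the hypothesis that $h$ has no increasable subset is equivalent to: for every nonempty interior $S$ there exists a flat rhombus $R$ with $a_R > o_R$.

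It is then convenient to recast integrality in terms of edge labels. Label each unit edge of $\bighive$ by the difference of its two endpoint labels, and rewrite $\cont(R) = (b-a) - (c-d)$ for the rhombus pictured in the definition; this shows that a rhombus is flat precisely when its two parallel long edges carry equal edge labels. The boundary edge labels are exactly the parts of $\lambda$, $\mu$, $\nu$ (the successive differences of the partial-sum borders $\ps[\lambda]$, $|\lambda|+\ps[\mu]$, $\ps[\nu]$), hence are integers when $\lmn \in \pn$. As every vertex label equals a corner label plus the signed sum of edge labels along any lattice path to that vertex, and the corner labels $0, |\lambda|, |\lambda|+|\mu|$ are integral, it suffices to prove that every interior edge label is integral; and a given interior edge label is integral as soon as a chain of flat rhombi connects its edge to a boundary edge in the same direction.

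The heart of the matter --- and the step I expect to be the main obstacle --- is to show that hypotheses (i) and (ii), together with regularity of $\lmn$, force exactly this connectivity: the flat rhombi of $h$ organize into maximal ``lines'' in each of the three directions, each line terminating on the boundary, so that every interior edge is joined to the boundary by such a chain. I would argue by contradiction. If some interior edge were \emph{not} connected to the boundary through flat rhombi, then from the vertices lying on the ``unconnected'' edges one assembles a nonempty interior subset $S$ for which every flat rhombus satisfies $o_R \geq a_R$, i.e.\ an increasable subset, contradicting (ii). The vertex hypothesis (i) is what guarantees there are enough flat rhombi present to pin $h$ down (so that no genuine free direction survives), while regularity of $\lmn$ rules out degenerate flat configurations along the border --- distinct parts force the extreme boundary rhombi to have strictly positive content --- which both seeds the propagation and ensures the lines terminate cleanly on the boundary rather than collapsing internally. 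Carrying this structural analysis out rigorously is precisely the combinatorial core of the Knutson--Tao honeycomb rigidity theorem, transported to hives.

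Granting the connectivity, the conclusion is immediate: each interior edge label equals a boundary edge label, namely a part of $\lambda$, $\mu$ or $\nu$, and hence each interior vertex label is a $\integers$-linear combination of the boundary labels. In particular, when $\lmn \in \pn$ all interior labels are integers, so $h \in \Hivez(\lmn)$. Everything except the rigidity analysis of the third paragraph is routine; that analysis is where all the real work lies.
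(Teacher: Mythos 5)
First, a point of reference: the paper does not prove this proposition at all. It is stated as a known theorem of Knutson--Tao, cited to \cite{KTHives} and to Buch's exposition \cite{buch}, and then used as a black box in the saturation argument. So your attempt has to stand on its own, and judged that way it has a genuine gap. The outer layers are correct and follow the standard route: your translation of hypothesis (ii) — $h+\epsilon I_S$ is a hive for small $\epsilon>0$ if and only if $o_R \geq a_R$ for every flat rhombus $R$, since non-flat rhombi have strictly positive slack — is right, and so is the reduction of the conclusion to a connectivity statement: flatness of a rhombus propagates equality between its parallel edge labels, the boundary edge labels are the parts of $\lambda,\mu,\nu$, so integrality of all interior labels follows once every interior edge is joined to a boundary edge by a chain of flat rhombi.

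The gap is your third paragraph, which is not a proof but a promissory note. You defer the connectivity claim to ``the combinatorial core of the Knutson--Tao honeycomb rigidity theorem,'' which is circular here: that core \emph{is} the proposition to be proved. The contradiction you sketch also does not work as stated: if $S$ is taken to be the vertices on the edges not flat-connected to the boundary, you must verify $o_R \geq a_R$ for \emph{every} flat rhombus, and the dangerous configuration — a flat rhombus with an acute vertex in $S$ and both obtuse vertices outside $S$ — is not excluded by anything you say. Excluding it is exactly the content of Knutson--Tao's theorem that largest lifts of regular boundaries are simply degenerate (the ``loop-breathing'' analysis), and that analysis uses hypothesis (i) and the regularity of $\lmn$ in essential, concrete ways. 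Tellingly, your sketch never actually invokes either: the roles you assign them are purely rhetorical, and your parenthetical justification for regularity is false as stated — no rhombus in $\bighive$ has all four vertices on the boundary (for $n\geq 3$), so regularity of the parts alone never forces any rhombus to have positive content. Moreover, if (ii) alone implied the connectivity, as your sketch suggests, then every interior label would be determined by the boundary and $h$ would automatically be a vertex, making hypothesis (i) redundant — a red flag that the argument cannot be complete. To close the gap you need the genuine structural analysis of how flat rhombi overlap and chain together, as carried out in \cite{KTHives} or in the appendix of \cite{buch}.
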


\subsection{Increasable subsets for hives and $132$-avoidance}
Let $\lmn \in \reals^n$ with $|\lambda| + |\mu| = |\nu|$. The following simple observation is a crucial step in extending the Knutson--Tao method to our setting.

\begin{lemma}\label{lem:fw-increasable}
  Let $\sigma \in S_n$ be $132$-avoiding  and let $h \in \koganH(\lmn, F_\sigma)$. Let $S$ be an increasable subset for $h$, say $h' = h + \epsilon I_S \in \Hive(\lmn)$ for some $\epsilon > 0$. Then
  $h' \in \koganH(\lmn, F_\sigma)$.   
\end{lemma}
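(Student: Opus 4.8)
The plan is to show directly that every rhombus $R_{ij}$ with $(i,j)\in F_w$ — which is flat in $h$ — stays flat in $h'$. Write $I_S(r,c)$ for the value of $I_S$ at the vertex in row $r$ (from the top) and position $c$ of $\bighive$, so that $I_S(r,c)=0$ whenever $(r,c)$ lies on the boundary of $\bighive$, in particular on the left edge ($c=1$) or the bottom edge ($r=n$). The content formula for an NE-slanted rhombus gives
\[ \cont_{h'}(R_{ij}) = \cont_h(R_{ij}) + \epsilon\, d_{ij}, \qquad d_{ij} := I_S(i,j+1)+I_S(i-1,j)-I_S(i,j)-I_S(i-1,j+1). \]
Since $R_{ij}$ is flat in $h$ and $h'\in\Hive(\lmn)$, we have $\cont_h(R_{ij})=0$ and $\cont_{h'}(R_{ij})\ge 0$, whence $d_{ij}\ge 0$ for every $(i,j)\in F_w$. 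It therefore suffices to prove $d_{ij}=0$ for all $(i,j)\in F_w$; I will in fact establish the stronger statement that $I_S$ vanishes at \emph{every} vertex of the region $\bigcup_{(i,j)\in F_w}R_{ij}$.

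Recall from \S\ref{sec:fw-312} that $F_w=\{(i,j): p\le i\le n,\ 1\le j\le m_i\}$ with $m_p\le m_{p+1}\le\cdots\le m_n$ and $m_i<i$; in particular, for each $c$ the row-set $\{\,i:(i,c)\in F_w\,\}=\{\,i:m_i\ge c\,\}$ is, when nonempty, a contiguous interval $[q_c,n]$ reaching the bottom row $n$. I would prove by induction on the position $c\ge 2$ that
\[ I_S(i,c)=0 \quad\text{for all } q_{c-1}-1\le i\le n, \qquad q_{c-1}:=\min\{i:m_i\ge c-1\}. \]
Granting this, each of the four vertices of any $R_{ij}$ with $(i,j)\in F_w$ lies either on the left edge (position $1$) or at a position $\ge 2$ inside the asserted range, so $I_S$ kills all of them and $d_{ij}=0$, which is exactly the desired conclusion $h'\in\koganH(\lmn,F_w)$.

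The inductive step rests on a single observation: once the previous column carries $I_S=0$, the flatness inequality collapses into a monotone chain pinned to the boundary. For the base case $c=2$, the position-$1$ vertices of $R_{i,1}$ lie on the left edge, so $d_{i,1}=I_S(i,2)-I_S(i-1,2)\ge 0$ for every $p\le i\le n$; reading this downwards gives $I_S(p-1,2)\le\cdots\le I_S(n,2)=0$ (the last term vanishing since row $n$ is the bottom edge), forcing all terms to vanish. For the step $c\to c+1$, the inductive hypothesis makes the position-$c$ terms of $d_{i,c}$ drop out, leaving $d_{i,c}=I_S(i,c+1)-I_S(i-1,c+1)\ge 0$ for all $q_c\le i\le n$; the same downward reading, ending at the boundary value $I_S(n,c+1)=0$, forces $I_S(\cdot,c+1)=0$ on its range (the nesting $q_c\ge q_{c-1}$ guarantees the hypothesis covers both $I_S(i,c)$ and $I_S(i-1,c)$). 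The place where the hypothesis on $w$ really enters is the monotonicity $m_p\le\cdots\le m_n$: it is what makes the rows occurring in each column a \emph{connected} block ending at $n$, so that the chain of inequalities is unbroken and actually reaches the bottom boundary. This bottom-and-left-justified geometry of $F_w$ (equivalently, the $312$-avoidance of $w$) is the crux of the argument; once it is in hand the proof is a routine per-column telescoping, and the only thing needing care is the coordinate bookkeeping for the NE rhombi.
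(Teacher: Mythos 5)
Your proof is correct and follows essentially the same route as the paper's: both establish the stronger claim that $I_S$ vanishes at every vertex of the region $\bigcup_{(i,j)\in F_w} R_{ij}$ by an inductive sweep starting from the boundary of $\bighive$, exploiting the left-and-bottom-justified staircase shape of $F_w$ together with $\cont_{h'}(R_{ij}) \geq 0 = \cont_h(R_{ij})$. The only difference is bookkeeping: the paper excludes one vertex per rhombus (processing rhombi left-to-right along rows, bottom row to top, so that three vertices of each rhombus are already excluded and the fourth, being acute, would make the content negative), whereas you resolve a whole column at a time via the telescoping chain $I_S(i-1,c+1) \leq I_S(i,c+1)$ anchored at the bottom edge; both mechanisms are valid instances of the same argument.
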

\begin{proof}
  We need to establish that $R_{ij}(h') =0$ for all $(i,j) \in F_\sigma$. Since $R(h') = R(h)$ for any rhombus $R$ of $\bighive$ whose vertices are disjoint from $S$, we will be done if we show that $S$ is disjoint from the set of vertices of the rhombi $R_{ij}$ for $(i,j) \in F_\sigma$.  This is trivial if $F_\sigma$ is empty. If $F_\sigma$ is non-empty, then by \eqref{eq:formfsigma}, we conclude that $(n,1) \in F_\sigma$. The rhombus  $R_{n1}$ has three vertices on the boundary (see Figure~\ref{fig:NE-labels}), and these cannot be in $S$. The fourth vertex is acute-angled, and if it belongs to $S$, then  $R_{n1}(h') < 0$, a contradiction. Moving on to the next rhombus $R_{n2}$ (if $(n,2) \in F_\sigma$), again three of its vertices cannot be in $S$ since they are either on the boundary or shared with $R_{n1}$. Neither can its fourth vertex, since it is acute-angled as before. Proceeding in this fashion, left-to-right along the rows, from the bottom row to the top, we conclude that none of the vertices of the $R_{ij}$ can belong to $S$ for $(i,j) \in F_\sigma$.
\end{proof}

\section{Proof of Theorem~\ref{thm:mainthm-312}(1)}\label{sec:hivehorncones}

\subsection{The Hive and Horn cones}

We recall the hive cone $\Hive$ from Definition~\ref{def:hivecone}. Consider the projection map
\[ \pi: \Hive \to \reals^{3n}, \;\; h \mapsto (\lmn) \]
where the boundary labels of $h$ are $\ps[\lambda], |\lambda| + \ps[\mu], \ps[\nu]$ as in Figure~\ref{fig:bighive-rhombi}. In other words, let $\alpha, \beta, \gamma$ respectively denote the left, bottom and right boundaries of $h$ (read top$\rightarrow$bottom, left$\rightarrow$right and top$\rightarrow$bottom respectively), then $\pi(h)  = (\sd \alpha, \sd \beta, \sd \gamma)$. 

The image
\begin{equation}\label{eq:horncone}
  \horn :=\pi(\Hive)
\end{equation}
is a polyhedral cone in $\reals^{3n}$. This coincides with the cone of spectra of triples $(A,B,C)$ of $n \times n$ Hermitian matrices with $C = A+B$ \cite{KTW}.

\subsection{Refined Hive and Horn cones}
\begin{definition}\label{def:sighivecone}
Let $\sigma \in S_n$ be $132$-avoiding and let $F_\sigma$ be as in \eqref{eq:formfsigma}.
The $\sigma$-{\em Hive cone}  is the set 
\[ \kcone[\sigma] := \{ h \in \Hive: R_{ij}(h) =0 \text{ for all } (i,j) \in F_\sigma\} \]
  \end{definition}

This is a face of $\Hive$ and clearly forms a polyhedral cone in its own right\footnote{A similar definition also works for all $\sigma \in S_n$, but in light of \eqref{eq:hivekogandef} and Proposition~\ref{prop:132av}, that would be a union of cones in general.}. By analogy to \eqref{eq:horncone}, we define:
\begin{equation}
\horn(\sigma) = \pi \left(\kcone[\sigma]\right)
\end{equation}
The scaling property \eqref{eq:scalingprop} shows that $\horn(\sigma)$ is closed under scaling by $k \in \reals_{>0}$. In fact, $\horn(\sigma)$ is a polyhedral cone in $\reals^{3n}$, being the image of a polyhedral cone under a projection map \cite[Lecture 1]{Ziegler}. In this notation, the Horn cone \eqref{eq:horncone} is $\horn = \horn(\id)$ where $\id \in S_n$ is the identity permutation.

\subsection{Proof of Theorem~\ref{thm:mainthm-312}(1) for $w$ $312$-avoiding}
With these definitions and Lemma~\ref{lem:fw-increasable} in place, we can use Knutson--Tao's arguments to prove Theorem~\ref{thm:mainthm-312}(1) for $w$ $312$-avoiding. Recall that if $w$ is $312$-avoiding, then $w_0 w$ is $132$-avoiding. Now by Theorem~\ref{thm:hive-desc-lrw}, $c_{k\lambda, k\mu}^{\,k\nu}(w) >0$ implies in particular that \[\Hive(k\lambda, k\mu, k\nu,\wnot w) \neq \emptyset.\] Thus $(k\lambda, k\mu, k\nu) \in \horn(\wnot w)$, and scaling by $k^{-1}$, we obtain that $(\lmn) \in \horn(\wnot w)$; in fact $(\lmn) \in \horn_\integers(\wnot w)$ since they were partitions to begin with.
So it is enough to establish the following statement (for $w$ $312$-avoiding):
\begin{equation}\label{eq:equiv-satprop}
    \koganHz(\lmn,\wnot w) \text{ is non-empty for all } (\lmn) \in \horn_\integers(\wnot w).
\end{equation}

To this end, we define the {\em largest lift map}, following \cite{KTHives, buch}. Choose a functional $\zeta$ on the cone $\kcone[\wnot w]$ which maps each hive $h$ to a generic positive linear combination of its interior vertex labels. Then, for each $(\lmn) \in \horn(\wnot w)$, the maximum value of $\zeta$ on $\pi^{-1}(\lmn)$ is attained  at a unique point; this point will be called its {\em largest lift}. The map
\[\ell: \horn(\wnot w) \to \kcone[\wnot w], \;\;\;\; (\lmn) \mapsto \text{ largest lift of } (\lmn) \]
is {continuous} and {piecewise-linear} \cite{buch,sturmfels-thomas}.

It is also clear that $\ell(\lmn)$ is a vertex of $\koganH(\lmn,\wnot w)$, thereby satisfying the first condition of Proposition~\ref{prop:kt-increasable}. We claim that it also satisfies the second condition there, i.e., that $h = \ell(\lmn)$ has no increasable subsets. For if $S$ is an increasable subset, let  $h' = h + \epsilon I_S \in \Hive(\lmn)$ for some $\epsilon > 0$. By Lemma~\ref{lem:fw-increasable}, $h' \in \koganH(\lmn,\wnot w)$. But $\zeta(h') > \zeta(h)$, violating maximality of $\zeta(h)$.

So Proposition~\ref{prop:kt-increasable} implies that for $\lmn$ regular, each interior label of $\ell(\lmn)$ is an integer linear combination of the $\lambda_i, \mu_i, \nu_i, \, 1 \leq i \leq n$.
As in \cite[\S 4]{buch} and \cite{KTHives}, by the continuity of $\ell$, it follows that each piece of $\ell$ is a linear function of $(\lmn) \in \reals^{3n}$ with $\integers$-coefficients. As a corollary:
\[ \ell(\horn_\integers(\wnot w)) \subseteq \kzcone[\wnot w] \]
Thus, for $(\lmn) \in \horn_\integers(\wnot w)$, we have $\ell(\lmn) \in  \koganHz(\lmn,\wnot w)$, thereby establishing \eqref{eq:equiv-satprop}.
This proves Theorem~\ref{thm:mainthm-312}(1) for $w$ $312$-avoiding. \qed

\subsection{Proof of Theorem~\ref{thm:mainthm-312}(1) for $231$-avoiding permutations}\label{sec:blocksproof}
If $w$ is $231$-avoiding, then $w^{-1}$ is $312$-avoiding. Proposition~\ref{prop:properties-clmn}(d) finishes the argument in this case (see also Proposition~\ref{prop:312-231}).

\subsection{Proof of Theorem~\ref{thm:mainthm-312}(1) for commuting products in Young subgroups} 
We note that Proposition~\ref{prop:w1w2mult} concludes the proof.

\section{A strengthened version of the saturation theorem}\label{sec:strongthm}
\subsection{}\label{sec:generalcase}  For $n=4$, the only permutations in $S_4$ which are not of the form of Theorem~\ref{thm:mainthm-312} are $3412, 3142, 2413, 4231$ (in one-line notation). For each of these $w$, there exist two reduced faces $F$ such that $\varpi(F) = w$. In these cases, $\kcone$ is a union of two polyhedral cones.

While our methods do not apply to a general $w \in S_n$ (beyond those covered by Theorem~\ref{thm:mainthm-312}), we do not know if the saturation property fails  there. In particular, a preliminary search using {\em Sage} for  $n=4, 5$ and small $\lmn, k$ did not turn up any counterexamples. 

\subsection{} We however have a strengthened version of the main theorem which recovers saturation for general $w$ at the cost of imposing restrictions on the $\lambda, \mu$.

For the following definition, recall here the notion of permutations of {\em special form} from Definition~\ref{def:splform}:
\begin{definition}
    Given a subset $I$ of $\{1, 2, \ldots, n-1\}$, let $W_I$ denote the parabolic subgroup of $S_n$ generated by $\{s_i: i \in I\}$. A pair  $(I, J)$ of such subsets will be called a {\em special pair} if every double coset in $W_I \backslash S_n /W_J$ has a representative of special form.
\end{definition}

We have the following simple properties of special pairs.
\begin{proposition}\label{prop:specpairprop}
Let $(I,J)$ be a special pair. Then:
\begin{enumerate}
    \item $(J,I)$ is a special pair.
    \item If $I' \supseteq I$ and $J' \supseteq J$, then $(I', J')$ is a special pair. \qed
\end{enumerate}
\end{proposition}
\begin{proof}
    The first assertion follows from the observation that $w$ is of special form iff $w^{-1}$ is of special form. The second is obvious.
\end{proof}
We will need the following modified version of Definition~\ref{def:sat}.
\begin{definition}\label{def:sat-new}
Fix $\lambda, \mu \in \pn$. A permutation $w \in S_n$ is said to have the {\em saturation property for $(\lambda, \mu)$} if the following holds for all $\nu \in \pn$: 
\begin{equation} \label{eq:satprop-new}
\lrwk > 0 \text{ for some } k\geq 1 \text{ implies } \lrw > 0.
\end{equation}
\end{definition}

\begin{thm}\label{thm:IJ}
Let $(I,J)$ be a special pair. Then every $w \in S_n$ has the saturation property for pairs of dominant weights $(\lambda,\mu)$ satisfying
\[ \lambda(\alpha_i^\vee) =0, \; \mu(\alpha_j^\vee)=0 \;\;\text{ for all } i \in I, j \in J.\]
\end{thm}
\begin{proof}
    For $\lambda, \mu$ as above, we have the stabilizers $W_\lambda \supseteq W_I$ and $W_\mu \supseteq W_J$. Since $(I,J)$ is a special pair, every double coset in $W_I \backslash S_n /W_J$ has a special form representative, and the same thereby holds for $W_\lambda \backslash S_n /W_\mu$. Given any $w \in S_n$, let $\sigma \in W_\lambda w W_\mu$ be of special form. Since $W_{k\lambda} = W_\lambda$ and $W_{k\mu} = W_\mu$ for all $k \geq 1$, we obtain $\lrwk = \lrwk[\sigma]$ by  Proposition~\ref{prop:kkmod}(1). The required conclusion now follows from Theorem~\ref{thm:mainthm-312}.
\end{proof}

In view of Proposition~\ref{prop:specpairprop}, we consider the partial order $\preceq$ on special pairs $(I,J)$ defined by:
\[ (I,J) \preceq (I',J') \text{ if } I \subseteq I' \text{ and } J \subseteq J'\]
The minimal special pairs with respect to this partial order would impose the least restrictions on $(\lambda,\mu)$ in Theorem~\ref{thm:IJ}. We note that one can appropriately modify  Theorem~\ref{thm:IJ} for the semigroup property as well. 
\begin{example}
    For $S_4$, the minimal special pairs are $(I,J) = (\emptyset, \{1\}), (\emptyset, \{3\}), (\{1\},\emptyset),(\{3\},\emptyset)$ as can be verified using the Remark in \S\ref{sec:generalcase}. Thus, every $w \in S_4$ has the saturation property for pairs $(\lambda,\mu)$ for which $\lambda(\alpha_i^\vee)=0$ or $\mu(\alpha_i^\vee)=0$ for $i=1$ or $i=3$.
\end{example}

\section{Symmetry of the $\lrw$}\label{sec:right-keys}
The symmetry $\lr = c_{\mu\lambda}^\nu$  was first studied via hives in \cite{Henriques-Kamnitzer}. There is another point-of-view stemming from Proposition~\ref{prop:bsd-props}, which leads to a bijective proof of the  general symmetry property
$$ \lrw = c_{\mu\lambda}^{\nu}(w^{-1})$$

First we recall some definitions and notations.
Consider the ``North-Easterly'' version $\bsd^{\scriptscriptstyle{NE}}$ of $\bsd$, which takes successive differences of labels along the $NE-SW$ direction (red edges of Figure~\ref{fig:example-map}) (see \cite[Example 2.8]{King-terada-etal} and \cite[Appendix]{buch}, whose hive-drawing conventions differ from ours and from each other!). Consider $h \in \Hive(\lmn)$ then $\bsd^{NE}(h)$ is a GT pattern of shape $\lambda$, which can be interpreted as a {\em contretableau} $T^\dag$ of shape $\lambda$ \cite{buch}. The map $\bsd^{NE}$ is injective, as follows by arguments similar to those establishing Proposition \ref{prop:bsd-props}.

Fix a subset $F \subseteq \{(i,j): n \geq i > j \geq 1\}$. Consider the face of $\GT(\mu)$ obtained by setting $a_{i-1,j} - a_{i,j+1}=0$ for $(i,j) \in F$ and leaving all other inequalities untouched.  
We call this the {\em dual Kogan face} $\overline{\kogan}(\mu, F)$. To each pair $(i,j) \in \{(i,j): n \geq i > j \geq 1\}$, associate the simple transposition $s_{j} \in S_n$. We consider the total order on pairs $(i,j)$ defined by $(i,j)$ precedes $(i',j') \iff$ either $i<i'$, or $i=i'$ and $j>j'$. We list the elements of $F$ in increasing order relative to this total order. Denote the product of the corresponding $s_{j}$ in this order by $\overline{\sigma}(F)$. If $\len \overline{\sigma}(F) = |F|$, i.e., this word is reduced, we say that $F$ is {\em reduced} and set \cite[Definition 5.1]{fujita}:
\[ \overline{\varpi}(F) = \wnot \,\overline{\sigma}(F) \,\wnot \]
For $w \in S_n$, let $\overline{\kogan}(\mu, w) := \bigcup \overline{\kogan}(\mu, F)$, the union over reduced $F$ for which $\overline{\varpi}(F) = w$.
We can now state the following result of Fujita \cite[Corollary 5.2]{fujita}:

\begin{proposition}\label{prop:fujita1}
 There is a bijection between $\overline{\koganz}(\mu, \wnot w \wnot)$ and $ \demcrys(\mu, w)^{op}$. 
\end{proposition}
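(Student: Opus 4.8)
The plan is to derive Proposition~\ref{prop:fujita1} from Proposition~\ref{prop:fujita} by applying the Schützenberger (Lusztig) involution, which at once converts a Demazure crystal into an opposite Demazure crystal and a Kogan face into a dual Kogan face. Let $\iota\colon\Tab(\mu)\to\Tab(\mu)$ denote this involution (evacuation of semistandard tableaux). I would use its three standard properties in type $A_{n-1}$: it is a shape-preserving involution; it reverses weights (a tableau of weight $\beta$ is sent to one of weight $\wnot\beta$) and in particular intertwines the crystal operators via $\iota\circ f_i=e_{n-i}\circ\iota$ and $\iota\circ e_i=f_{n-i}\circ\iota$; and it carries the highest weight tableau to the lowest weight one, $\iota(\supstd[\mu])=T_\mu^{*}$.

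First I would push $\iota$ through the tableau side. Fixing a reduced word $w=s_{i_1}\cdots s_{i_k}$ and moving $\iota$ past each lowering operator with $\iota f_i=e_{n-i}\iota$ gives
\[
\iota\bigl(f_{i_1}^{m_1}\cdots f_{i_k}^{m_k}\,\supstd[\mu]\bigr)=e_{n-i_1}^{m_1}\cdots e_{n-i_k}^{m_k}\,T_\mu^{*}.
\]
Since $s_{n-i_1}\cdots s_{n-i_k}=\wnot w\wnot$ is again a reduced word (conjugation by $\wnot$ preserves length), this identifies $\iota\bigl(\demcrys(\mu,w)\bigr)=\demcrys(\mu,\wnot w\wnot)^{op}$. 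Substituting $\wnot w\wnot$ for $w$ and using $\wnot^2=1$ yields the form I want,
\[
\demcrys(\mu,w)^{op}=\iota\bigl(\demcrys(\mu,\wnot w\wnot)\bigr).
\]

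Next I would transport $\iota$ to Gelfand--Tsetlin patterns by setting $\widetilde\iota:=\bg^{-1}\circ\iota\circ\bg$, an involution of $\GTz(\mu)$ characterized by $\iota\circ\bg=\bg\circ\widetilde\iota$. Proposition~\ref{prop:fujita}, applied with parameter $\wnot w\wnot$, gives a bijection $\bg\colon\koganz(\mu,w\wnot)\xrightarrow{\sim}\demcrys(\mu,\wnot w\wnot)$. Applying $\widetilde\iota$ on the source and $\iota$ on the target and using the intertwining relation, $\bg$ restricts to a bijection $\widetilde\iota\bigl(\koganz(\mu,w\wnot)\bigr)\xrightarrow{\sim}\iota\bigl(\demcrys(\mu,\wnot w\wnot)\bigr)=\demcrys(\mu,w)^{op}$. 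It therefore remains only to identify the source as the advertised dual Kogan face, i.e. to prove the combinatorial identity
\[
\widetilde\iota\bigl(\koganz(\mu,w\wnot)\bigr)=\overline{\koganz}(\mu,\wnot w\wnot),
\]
after which the composite $\bg\circ\widetilde\iota$ is the desired bijection.

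The hard part will be this last identity. Because $\widetilde\iota$ reverses weights, it must interchange the two families of facet inequalities of the GT cone: a vanishing North--East difference $NE_{ij}$ is carried to a vanishing difference of the form $a_{(i'-1)(j'-1)}-a_{i'j'}$, which is precisely the type of equality cutting out dual Kogan faces. Granting that $\widetilde\iota$ thus sends reduced Kogan faces bijectively to reduced dual Kogan faces, the remaining task is bookkeeping: track how a subset $F$ with $\varpi(F)=w\wnot$ is transformed to a subset $\widetilde\iota(F)$, and verify that $\overline{\varpi}(\widetilde\iota(F))=\wnot w\wnot$. This means reconciling the two total orders on index pairs, the reindexings $s_{i-j}\mapsto s_{j-1}$ built into $\varpi$ and $\overline{\varpi}$, and the conjugations by $\wnot$ in both definitions. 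This is exactly the analysis underlying Fujita's \cite[Corollary 5.2]{fujita}; in the interest of brevity one may alternatively invoke that corollary directly after matching conventions.
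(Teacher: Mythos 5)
First, for context: the paper does not actually prove this proposition --- both Proposition~\ref{prop:fujita} and Proposition~\ref{prop:fujita1} are quoted from Fujita (his Corollaries 5.19 and 5.2, respectively), so your attempt at a genuine derivation goes beyond what the paper does. The crystal-theoretic part of your argument is correct, and in fact more careful than the paper's own bookkeeping: for the Sch\"{u}tzenberger involution one indeed has $\iota\circ f_i=e_{n-i}\circ\iota$ and $\iota(\supstd[\mu])=T_\mu^*$, and since conjugation by $\wnot$ preserves reduced words, $\iota(\demcrys(\mu,w))=\demcrys(\mu,\wnot w\wnot)^{op}$ exactly as you claim. (Note that this is \emph{not} what the paper's equation~\eqref{eq:invdem} asserts for Fujita's involution $\eta_\mu$; with the paper's Definition~\ref{def:opdemcry}, equation~\eqref{eq:invdem} cannot be literally correct --- for $n=2$, $\mu=(1,0)$, $w=1$ its two sides have cardinalities $1$ and $2$ --- and your $\wnot w\wnot$ version is the right one.) Combined with Proposition~\ref{prop:fujita} applied to $\wnot w\wnot$, this correctly reduces the proposition to the single identity $\widetilde\iota\bigl(\koganz(\mu,w\wnot)\bigr)=\overline{\koganz}(\mu,\wnot w\wnot)$.

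That identity is precisely where your proof has a genuine gap, and the argument you sketch for it is not valid. The transported map $\widetilde\iota=\bg^{-1}\circ\iota\circ\bg$ is the evacuation involution on GT patterns, a complicated piecewise-linear (not linear) involution, and there is no a priori reason it carries faces of $\GT(\mu)$ to faces: the image of a single Kogan face under a piecewise-linear map need not be a dual Kogan face, nor even convex. ``Weight reversal'' constrains only the row-sum functionals and says nothing about the facet inequalities $NE_{ij}\geq 0$, $SE_{ij}\geq 0$; it cannot force one family of facets to be exchanged with the other. What is true is a statement about \emph{unions} of reduced faces with prescribed $\varpi$, respectively $\overline{\varpi}$, and matching those Weyl-group labels under $\widetilde\iota$ is exactly the substance of Fujita's theorem, not bookkeeping. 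Indeed the logical order in the paper is the reverse of yours: equation~\eqref{eq:invkogan}, which is your missing identity (up to the same $\wnot$-discrepancy noted above), is \emph{deduced} by combining Propositions~\ref{prop:fujita}, \ref{prop:fujita1} and \eqref{eq:invdem}, so it is downstream of the statement you are trying to prove and unavailable as an input. Your fallback --- ``invoke Fujita's Corollary 5.2 directly'' --- is circular, since that corollary \emph{is} Proposition~\ref{prop:fujita1}. In summary, you have an honest reduction of Fujita's Corollary 5.2 to his Corollary 5.19 plus an unproved face-exchange property of evacuation; as a self-contained proof it does not close.
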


We also recall from Fujita \cite[\S 2]{fujita} that there is an involution $\eta_\mu:\Tab(\mu) \to \Tab(\mu)$ such that:
\begin{equation}\label{eq:invdem}
    \eta_\mu(\demcrys(\mu, w)) = \demcrys(\mu, \wnot w)^{op}; \hspace{1cm} \eta_\mu(\demcrys(\mu, w)^{op}) = \demcrys(\mu, \wnot w).
\end{equation}
Putting together Proposition \ref{prop:fujita}, Proposition \ref{prop:fujita1} and equation \eqref{eq:invdem}, we get the following:
\begin{equation}\label{eq:invkogan}
    \eta_\mu(\koganz(\mu, \wnot w )) = \overline{\koganz}(\mu,  w \wnot); \hspace{1cm} \eta_\mu(\overline{\koganz}(\mu, \wnot w \wnot)) = \koganz(\mu, w )
\end{equation}

\begin{lemma}\label{lem:wwinv}
    Let $h \in \koganHz(\lmn, \wnot w)$ then $\bsd h \in \koganz(\mu, \wnot w )$ and $\bsd^{NE}h \in \overline{\koganz}(\lambda, w^{-1} \wnot )$.
\end{lemma}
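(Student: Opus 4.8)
The plan is to dispatch the first assertion immediately from the definitions and to concentrate all the effort on the second. Indeed, $\koganH(\lmn, \wnot w)$ was defined as $\bsd^{-1}(\kogan(\mu, \wnot w))$, so for $h \in \koganHz(\lmn, \wnot w)$ one has $\bsd h \in \kogan(\mu, \wnot w)$ at once; since $h$ is integral, Proposition~\ref{prop:bsd-props}(2) gives $\bsd h \in \GTz(\mu)$, and hence $\bsd h \in \koganz(\mu, \wnot w)$.

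For the second assertion my guiding principle would be that the content $\cont(R)$ of each rhombus $R$ of $\bighive$ is a fixed linear functional of $h$, independent of which differencing map one applies. Recall (\S\ref{sec:gt-kogan}) that $\bsd$ realizes the contents of the NE-slanted rhombi $R_{ij}$ as the North-East differences $NE_{ij}$ of $\bsd h \in \GT(\mu)$. I would carry out the parallel computation for $\bsd^{NE}$: expressing the entries of the shape-$\lambda$ pattern $\bsd^{NE}h$ as successive differences of hive labels along the red ($NE$--$SW$) edges, and matching them against the four-vertex content formulas. The expected outcome is that the South-East differences of $\bsd^{NE}h$ are precisely the contents of the NE-slanted rhombi $R_{ij}$, while its North-East differences are the contents of the vertical rhombi (which re-confirms $\bsd^{NE}h \in \GT(\lambda)$). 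In particular, the flatness $\cont(R_{ab}) = 0$ of a single NE-slanted rhombus $R_{ab}$ should become the tightness of the dual-Kogan inequality of $\bsd^{NE}h$ indexed by $(n-b+1,\, a-b+1)$.

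With this dictionary in hand, the remaining task is the permutation bookkeeping. Since $h \in \koganHz(\lmn, \wnot w)$ and $\koganH(\lmn,\wnot w) = \bsd^{-1}(\kogan(\mu,\wnot w))$, there is a reduced $F$ with $\varpi(F) = \wnot w$ such that all rhombi $R_{ab}$, $(a,b) \in F$, are flat in $h$; equivalently $\sigma(F) = w\wnot$. By the previous step the tight dual-Kogan inequalities of $\bsd^{NE}h$ are indexed by $F' = \{(n-b+1, a-b+1) : (a,b) \in F\}$, so $\bsd^{NE}h \in \overline{\kogan}(\lambda, F')$. Under $(a,b) \mapsto (n-b+1, a-b+1)$ the rhombus $R_{ab}$ (which carries $s_{a-b}$ in the definition of $\sigma$) is sent to a dual index carrying $s_{(a-b+1)-1} = s_{a-b}$ in the definition of $\overline{\sigma}$; thus $\overline{\sigma}(F')$ and $\sigma(F)$ are products of the very same transpositions. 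I would then compare the two total orders and check that the order defining $\overline{\sigma}$ is the reverse of the ``column-first'' order $\sigma'$ of Remark~\ref{rem:sigma} (both sort the pairs by $b$ and then by $a$, but in opposite directions). Granting this, $\overline{\sigma}(F') = \sigma'(F)^{-1} = \sigma(F)^{-1} = (w\wnot)^{-1} = \wnot w^{-1}$, the middle equality being Remark~\ref{rem:sigma}; so $F'$ is reduced (its word has $|F|$ letters and length $|F|$) and $\overline{\varpi}(F') = \wnot \, \overline{\sigma}(F') \, \wnot = w^{-1}\wnot$. Being also integral, $\bsd^{NE}h$ then lies in $\overline{\koganz}(\lambda, w^{-1}\wnot)$, as claimed.

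The step I expect to be the crux is the order comparison in the last paragraph: one must verify that the total order used for $\overline{\sigma}$ reverses $\sigma'$, so that the two words are genuinely inverse to one another, and that Remark~\ref{rem:sigma} may legitimately be invoked to replace $\sigma'(F)$ by $\sigma(F)$. By contrast the rhombus-to-inequality dictionary of the middle paragraph is a routine comparison of the explicit four-term content formulas with the definitions of the $NE$ and $SE$ differences; the only point demanding care there is to propagate the index substitution $(a,b) \leftrightarrow (n-b+1, a-b+1)$ consistently throughout.
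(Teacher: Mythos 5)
Your proof is correct and follows essentially the same route as the paper's: both reduce to a single reduced face $F$ with $\sigma(F) = w\wnot$ whose rhombi are flat in $h$, identify those flat NE-slanted rhombi with tight dual-Kogan (SE) inequalities of $\bsd^{NE}h$ carrying the same transpositions, and then use the fact that the total order defining $\overline{\sigma}$ reverses the order defining $\sigma'$ of Remark~\ref{rem:sigma}, so that $\overline{\sigma} = \sigma'(F)^{-1} = \sigma(F)^{-1} = \wnot w^{-1}$ and hence $\overline{\varpi} = w^{-1}\wnot$. Your explicit index correspondence $(a,b) \mapsto (n-b+1,\,a-b+1)$, together with the reducedness check, merely spells out what the paper compresses into the observations that $\bsd^{NE}h \in \overline{\koganz}(\lambda, F_0)$ ``where $F_0$ is thought as a face of $\GT(\lambda)$'' and that $\overline{\sigma}(F_0)$ is the same product of simple transpositions in reverse order.
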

\begin{proof}
Clearly $\bsd h \in \koganz(\mu, \wnot w )$ by the definition of $\koganHz(\lmn, \wnot w)$. This means that $\bsd h \in \kogan(\mu,F) $ for some reduced face $F$ of $\GT(\mu)$ such that $\sigma(F) = w \wnot$. Fix such a reduced face $F$, then $h \in \koganHz(\lmn,F_0)$, where $F_0$ is the hive reduced face such that $R_{ij}$ is flat for all $(i,j) \in F$ . Observe that $\bsd^{NE}h \in \overline{\koganz}(\lambda, F_0)$, where $F_0$ is thought as a face of $\GT(\lambda)$.

By Remark \ref{rem:sigma} we know that $\sigma(F) = \sigma'(F)$. Observe that $\overline{\sigma}(F_0) = \sigma'^{-1}(F)$ since $\overline{\sigma}(F_0)$ is the product of $s_i$'s in the reverse order of the product of $s_i$'s in $\sigma'(F)$. Then we have $\overline{\sigma}(F_0) = \wnot w^{-1}$ and $\overline{\varpi}(F_0) = w^{-1} \wnot$. This shows that $\bsd^{NE}h \in \overline{\koganz}(\lambda, w^{-1} \wnot )$.
\end{proof}

We will now construct a bijective map $\Psi: \koganHz(\lmn, \wnot w) \to \koganHz(\mu,\lambda,\nu, \wnot w^{-1})$. Let $h \in \koganHz(\lmn, \wnot w)$. By Lemma \ref{lem:wwinv} we have $\bsd^{NE}h \in \overline{\koganz}(\lambda, w^{-1} \wnot )$. From equation \eqref{eq:invkogan}, $\eta_\lambda(\overline{\koganz}(\lambda, w^{-1} \wnot )) = \koganz(\lambda, \wnot w^{-1} )$, which implies that $\eta_\lambda(\bsd^{NE}h) \in \koganz(\lambda, \wnot w^{-1} )$.
   
   From \cite[Appendix A]{buch} we know that $\eta_\lambda(\bsd^{NE}h)$ is in $Tab_\mu^\nu(\lambda)$, which implies that $\eta_\lambda(\bsd^{NE}h)$ is in the image of the injective map $\bsd : \koganHz(\mu,\lambda,\nu, \wnot w^{-1}) \to \koganz(\lambda, \wnot w^{-1})$.  Denote the preimage of $\eta_\lambda(\bsd^{NE}h)$ under $\bsd$ by $h^*$, that is $h^* = \bsd^{-1}(\eta_\lambda(\bsd^{NE}h))$.
   
   Now we define $\Psi(h) = h^*$. Clearly the map $\Psi$ is a well defined injective map since $\Psi = \bsd^{-1} \circ \eta_\lambda \circ \bsd^{NE}$ is a composition of injective maps. The inverse map of $\Psi$ can be easily defined in a similar way such that $\Psi^{-1} =(\bsd^{NE})^{-1} \circ \eta_\lambda \circ \bsd$.

\printbibliography

\end{document}